\documentclass[12pt]{amsart}
\usepackage{amsmath}
\usepackage{geometry,amsfonts,amssymb,amsthm,txfonts,pxfonts,amscd} 
\geometry{letterpaper} 

\def\struckint{\mathop{%
\def\mathpalette##1##2{\mathchoice{##1\displaystyle##2}%
  {##1\textstyle##2}{##1\scriptstyle##2}{##1\scriptscriptstyle##2}}%
\mathpalette
{\vbox\bgroup\baselineskip0pt\lineskiplimit-1000pt\lineskip-1000pt
\halign\bgroup\hfill$}
{##$\hfill\cr{\intop}\cr\diagup\cr\egroup\egroup}%
}\limits}
\newtheorem{theorem}{Theorem}
\newtheorem{lemma}[theorem]{Lemma}
\newtheorem{corollary}[theorem]{Corollary}

\theoremstyle{remark}

\newtheorem{remark}[theorem]{Remark}

\newtheorem{question}[theorem]{Question}

\begin{document}

\title{On extension of  coverings}

\author{Manfred Droste}
\address{Computer Science Department, University of Leipzig}
\email{droste@informatik.uni-leipzig.de}
\author{Igor Rivin}
\address{Department of Mathematics, Temple University, Philadelphia}
\email{rivin@temple.edu}
\thanks{Igor Rivin was supported by the National Science Foundation. He would also like to thank Dennis Sullivan and Nick Katz for enlightening discussions.}
\date{\today} 
\subjclass{57M12, 20B07}
\keywords{covering space, extension, infinite symmetric group, commutator, automorphism group}


\begin{abstract}
We address the question of when a covering of the boundary of a surface $F$ can be extended to a covering of the surface (equivalently: when is there a branched cover with a prescribed monodromy ).  If such an extension is possible, when can the total space be taken to be connected? When can the extension be taken to be regular? We give necessary and sufficient conditions for both finite and infinite covers (infinite covers are our main focus). In order to prove our results, we show group-theoretic results  of independent interests, such as the following extension (and simplification) of the theorem of Ore \cite{orecomm}: every element of the infinite symmetric group is the commutator of two elements which, together, act transitively.We also make some general remarks on writing elements of various groups as commutators.
\end{abstract}
\maketitle


\section*{Introduction}
\label{coversec}
This paper centers around the following question:
\begin{question}
\label{coverq}
Given a  surface $S$ with boundary $\partial S,$ and a covering map of $1$-manifolds 
$\pi: \widetilde{C} \rightarrow \partial S$ the fibers of which have constant cardinality. When does $\pi$ extend to a covering map $\Pi: \widetilde{S}\rightarrow S,$ where $\partial \widetilde{S} \simeq \widetilde{C}?.$ If such a cover exists, how do we construct it?
\end{question}
An alternative (but equivalent) version is the following:

\begin{question}
\label{coverq2}
Given a surface $S$ with a finite collection of distinguished points $\{p_1, \dotsc, p_n\}$ and an assignment of elements $\pi_i \in S_\Omega$ to each $p_i$, when is there a branched covering map
of $S$ of index $|\Omega|,$ such that the monodromy of the branched cover around the branch points is  given by $\pi_1, \dots, \pi_n?$  If such a cover exists, how do we construct it?
\end{question}

Questions \ref{coverq} and \ref{coverq2} (together with some history) are discussed in Section \ref{finitecov}.  The results are very clean in the case of surfaces of positive genus, and considerably more complicated for the disk. In Section \ref{infinitecov} we will be studying infinite covers. In this case, we get an even cleaner result for surfaces of positive genus, which depends on our Theorem \ref{orethm}, which states that every element of the infinite symmetric group $S_{\Omega}$ is the commutator of two elements which together act \emph{transitively}on $\Omega.$  We extend our results to non-orientable surfaces (of positive genus). These results depend on Theorems \ref{powers} and \ref{dglemma}, which extend results of \cite{Drosteuniv} to the transitive context.  Theorems \ref{orethm} and \ref{powers} appear to be of independent group-theoretic interest. We also obtain some partial results in the subtle case where $S$ is  a punctured sphere -- in this case the problem is overdetermined, hence the difficulty. A self-contained proof of Theorem \ref{orethm} is contained in Sections \ref{transitiveore},\ref{infinite}, and \ref{finitecycle}. In Section \ref{regcover} we comment on the question of when the covering space can be further chosen to be regular. In Section \ref{dig} we give a partial list of groups which have the surprising property that every element is a commutator -- this has far-reaching applications for a number of extension problems.

\section{Finite Covers}
\label{finitecov}
 First, we note that to every degree $n$ covering map $\sigma: X\rightarrow Y$ we can associate a permutation representation $\Sigma: \pi_1(Y) \rightarrow S_n.$ Further, two coverings $\sigma_1$ and $\sigma_2$ are equivalent if and only if the associated representations $\Sigma_1$ and $\Sigma_2$ are conjugate (see, for example, \cite{hatcheralgtop}[Chapter 1] for the details). This means that the boundary covering map $\pi$ is represented by  a collection of $k=|\pi_0(\partial S)|$ conjugacy classes $\Gamma_1, \dotsc, \Gamma_k$  in $S_n.,$ each of which is the conjugacy class of the image of the generator of the fundamental group of the corresponding component under the associated permutation representation. Theorem \ref{sullthm} below is essentially due to A. Hurwitz \cite{hurwitzriemann} and D. Husemoller \cite{husemollercovers}
\begin{theorem}
\label{sullthm}
A covering $\pi:\widetilde{C}\rightarrow C=\partial S$ extends to a covering of the surface $S$ if and only if one of the following conditions holds:
\begin{enumerate}
\item $S$ is a planar surface (that is, the genus of $S$ is zero and there exists a collection $\{\sigma_i\}_{i=1}^k$ of elements of the symmetric group $S_n$ with $\sigma_i \in \Gamma_i$ such that 
$\sigma_1 \sigma_2 \dots \sigma_k = e,$ where $e\in S_n$ is the identity.
\item $S$ is not a planar surface, and the sum of the parities of $\Gamma_1, \dotsc, \Gamma_k$ vanishes.
\end{enumerate}
\end{theorem}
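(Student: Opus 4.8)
The plan is to translate the problem into group theory via the standard correspondence between covers and permutation representations, and then read the answer off a one-relator presentation of $\pi_1(S)$. Recall the dictionary: a (possibly disconnected) degree $n$ cover of a connected space $Y$ is the same datum as a conjugacy class of homomorphisms $\pi_1(Y)\to S_n$, and pulling a cover back along an inclusion $A\hookrightarrow Y$ corresponds to restricting the homomorphism along $\pi_1(A)\to\pi_1(Y)$. Since $\partial S\neq\emptyset$, $S$ deformation retracts onto a wedge of circles, so $\pi_1(S)$ is free; with genus $g$ and $k=|\pi_0(\partial S)|\ge 1$ boundary components it is presented by generators $a_1,b_1,\dots,a_g,b_g,c_1,\dots,c_k$ and the single relation
\[
[a_1,b_1]\dotsm[a_g,b_g]\,c_1\dotsm c_k=e ,
\]
where $c_j$ is freely homotopic to the $j$-th boundary component. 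A degree $n$ cover of a circle is encoded by the cycle type of its deck permutation, i.e.\ by a single conjugacy class of $S_n$, so the boundary cover $\pi$ is precisely the datum of the classes $\Gamma_1,\dots,\Gamma_k$, and $\pi$ extends to $S$ \emph{if and only if} there is a homomorphism $\Phi\colon\pi_1(S)\to S_n$ with $\Phi(c_j)\in\Gamma_j$ for all $j$. No transitivity is required at this stage, since connectedness of the total space is not yet demanded.

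Now exploit the relation: it expresses $c_k$ in terms of the other generators, so $\pi_1(S)$ is \emph{free} on $\{a_i,b_i\}_{i\le g}\cup\{c_j\}_{j\le k-1}$. One may therefore choose $\Phi$ arbitrarily on these, subject only to $\Phi(c_j)\in\Gamma_j$ for $j\le k-1$, and $\Phi(c_k)$ is then forced; using that every permutation of $S_n$ is conjugate to its inverse, so that $\Gamma_k^{-1}=\Gamma_k$, one sees that an extension exists if and only if there are $\sigma_j\in\Gamma_j$ ($1\le j\le k$) and $x_i,y_i\in S_n$ ($1\le i\le g$) with
\[
[x_1,y_1]\dotsm[x_g,y_g]\,\sigma_1\dotsm\sigma_k=e .
\]

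It remains to split on the genus. If $g=0$ the product of commutators is empty, and this is exactly the assertion that some $\sigma_j\in\Gamma_j$ multiply to $e$ --- condition (1). If $g\ge1$: \emph{necessity} of the parity condition follows at once by applying the sign homomorphism $S_n\to\{\pm1\}$, which kills every commutator and is constant on each $\Gamma_j$, forcing the product of the signs of the $\Gamma_j$ to be $1$, i.e.\ the sum of their parities to vanish --- condition (2). For \emph{sufficiency}, assume the parities sum to zero, pick \emph{any} $\sigma_j\in\Gamma_j$, and observe that $\sigma_1\dotsm\sigma_k\in A_n$. Invoking the classical fact that every element of $A_n$ is a single commutator in $S_n$ (see \cite{orecomm}), write $(\sigma_1\dotsm\sigma_k)^{-1}=[x_1,y_1]$, take all remaining $x_i=y_i=e$, and the displayed identity holds; unwinding the dictionary yields the desired cover.

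The one non-formal ingredient is the fact just used: every even permutation is a commutator in $S_n$. Everything else is bookkeeping with the covering-space dictionary and with a one-relator free group, and the necessity half is a single application of the sign character. I expect this commutator fact to be the point that must be gotten right (by citation or a short direct argument); it is exactly what makes the positive-genus answer collapse to the parity condition, and it cannot be weakened to ``a product of a bounded number of commutators'', since when $g=1$ the relation offers only one commutator. It also remains to dispatch the harmless degenerate cases $k=0$ (empty boundary, nothing to extend) and $k=1$, $g=0$ (the disk, which forces $\Gamma_1=\{e\}$), and to bear in mind the convention in force --- ``planar'' here means \emph{orientable} of genus $0$, so the dichotomy is between orientable genus $0$ and orientable genus $\ge1$, non-orientable surfaces being treated separately later.
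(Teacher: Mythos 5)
Your proof is correct and follows essentially the same route as the paper: the covering-space/permutation-representation dictionary, the one-relator presentation of $\pi_1(S)$, the sign homomorphism for necessity in positive genus, and Ore's theorem that every even permutation is a single commutator for sufficiency. The only cosmetic difference is that the paper treats the nonplanar case with $k>1$ boundary components via a connected-sum decomposition along an auxiliary curve rather than writing down the full one-relator presentation at once, and your explicit flagging that ``not planar'' must be read as ``orientable of positive genus'' is consistent with the paper's proof, which only uses commutator relations.
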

\begin{proof}
In the planar case, the fundamental group of $S$ is freely generated by the generators $\gamma_, \dotsc, \gamma_{k-1}$ of the  fundamental groups of (any) $k-1$ of the boundary components.  All $k$ boundary components satisfy $\gamma_1 \dotsc \gamma_k = e,$ whence the result in this case.

In the nonplanar case, let us first consider the case where $k=1.$ The generator $\gamma$ of the single boundary component is then a product of $g$ commutators (where $g$ is the genus of the surface, and so $\Sigma(\gamma)$ is in the commutator subgroup of $S_n,$ which is the alternating group $A_n,$ so the class $\Sigma(\gamma)$ has to be even. On the other hand, it is a result of O. Ore \cite{orecomm} that any even permutation is a commutatior\footnote{Ore showed that every even permutation is a commutator of two permutation. At roughly the same time, N. Ito showed in \cite{itoalt} that every even permutation is a commutator of two \emph{even} permutations.} $\alpha \beta \alpha^{-1} \beta^{-1}$ and thus sending some pair of handle generators to $\alpha$ and $\beta$ respectively and the other generators of $\pi_1(S)$ to $e$ defines the requisite homorphism of $\pi_1(S)$ to $S_n.$

If $k>1,$ the surface $S$ is a connected sum of a surface of genus $g>0$ (by assumption) and a planar surface with $k$ boundary components.  Let $\gamma$ be the ``connected summing'' circle. By the planar case, there is no obstruction to defining $\Pi$ on the planar side (since $\gamma$ is not part of the original data). However, $\Sigma(\gamma)$ will be the inverse of the product of elements $\sigma_i \in \Gamma_i$ and so its parity will be the sum of the parities of $\Gamma_i.$ To extend the cover to the non-planar side of the connected sum, it is necessary and sufficient for this sum to be even.
\end{proof}

Some remarks are in order. The first one concerns the planar case of Theorem \ref{sullthm}. It is not immediately obvious how one might be able to figure out whether given some conjugacy classes in the symmetric group, there are representatives of these classes which multiply out to the identity. Luckily, there is the following result of Frobenius (see \cite{serregalois}[p. 69])
\begin{theorem}
\label{frob}
Let $C_1, \dotsc, C_k$ be conjugacy classes in a finite group $G.$ The number $n$ of solutions to the equation $g_1 g_2 \dots g_k = e,$ where $g_i\in C_i$ is given by 
\[
n = \dfrac{1}{|G|} |C_1| \dots |C_k| \sum_{\chi} \dfrac{\chi(x_1) \dots \chi(x_k)}{\chi(1)^{k-2}},
\]
where $x_k \in C_k$ and the sum is over all the complex irreducible characters of $G.$
\end{theorem}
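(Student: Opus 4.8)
The plan is to prove Theorem~\ref{frob} by the classical device of working in the group algebra $\cx[G]$ and isolating the coefficient of the identity element in a product of class sums. For each $i$ let $\widehat{C}_i = \sum_{g\in C_i} g \in \cx[G]$ be the corresponding class sum, and write the product as $\widehat{C}_1\widehat{C}_2\cdots\widehat{C}_k = \sum_{g\in G} a_g\, g$. By construction a tuple $(g_1,\dots,g_k)$ with $g_i\in C_i$ contributes to the coefficient $a_g$ exactly when $g_1\cdots g_k = g$; in particular $a_e$ is precisely the number $n$ we wish to compute.

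The second step is to extract $a_e$ by representation theory. For an arbitrary element $x=\sum_g a_g g$ of $\cx[G]$ one has $a_e = \tfrac{1}{|G|}\tr\bigl(\rho_{\mathrm{reg}}(x)\bigr)$, since the regular representation satisfies $\tr(\rho_{\mathrm{reg}}(g)) = |G|\,\delta_{g,e}$. Decomposing $\rho_{\mathrm{reg}}\cong\bigoplus_\chi \chi(1)\,\rho_\chi$ over the complex irreducible representations of $G$ then gives
\[
a_e \;=\; \frac{1}{|G|}\sum_\chi \chi(1)\,\tr\bigl(\rho_\chi(x)\bigr).
\]

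It remains to compute $\tr(\rho_\chi(\widehat{C}_1\cdots\widehat{C}_k))$. Each class sum $\widehat{C}_i$ is central in $\cx[G]$, so by Schur's lemma $\rho_\chi(\widehat{C}_i)$ is a scalar matrix; taking traces identifies the scalar as $\omega_\chi(C_i) = |C_i|\,\chi(x_i)/\chi(1)$ for any chosen $x_i\in C_i$. Hence $\rho_\chi(\widehat{C}_1\cdots\widehat{C}_k)$ is the scalar matrix $\bigl(\prod_{i=1}^k |C_i|\chi(x_i)/\chi(1)\bigr) I$, whose trace equals $|C_1|\cdots|C_k|\,\chi(x_1)\cdots\chi(x_k)/\chi(1)^{k-1}$. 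Substituting this into the displayed formula for $a_e$ and cancelling one factor of $\chi(1)$ produces exactly the asserted identity.

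There is no genuine obstacle: the statement is a standard consequence of character theory, and the only thing requiring care is the bookkeeping of powers of $\chi(1)$ — the $k$ class sums contribute $\chi(1)^{-k}$, the trace of a scalar $\chi(1)\times\chi(1)$ matrix restores one factor, and the multiplicity $\chi(1)$ of $\rho_\chi$ in the regular representation restores another, leaving $\chi(1)^{k-2}$ in the denominator. If one prefers to avoid the group algebra, the same result follows by writing $n=\sum_{g_i\in C_i}\delta_{g_1\cdots g_k,\,e}$, using column orthogonality in the form $\sum_\chi\chi(1)\chi(h)=|G|\,\delta_{h,e}$ with $h=g_1\cdots g_k$, and then summing over each $C_i$ independently via $\sum_{g\in C_i}\rho_\chi(g)=\omega_\chi(C_i)\,I$; this is essentially the same computation.
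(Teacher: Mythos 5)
Your proof is correct: the class-sum computation, the extraction of $a_e$ via the regular representation, and the Schur's lemma evaluation of $\rho_\chi(\widehat{C}_i)$ as the scalar $|C_i|\chi(x_i)/\chi(1)$ all check out, and the bookkeeping of the powers of $\chi(1)$ lands exactly on the $\chi(1)^{k-2}$ in the denominator. The paper itself gives no proof of this theorem --- it simply cites Serre --- so there is nothing to compare against; what you have written is the standard argument and it is complete.
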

The planar case is considered in \cite{Kulkarnicovers}, see also \cite{Pervova1,Pervova2,Pervova3}
The more elaborate questions of \emph{how many} covers exist with given data enumeration questions for covers are considered in a number of papers by A.~Mednykh -- see \cite{Mednykhenum} and references therein, and also by A. Eskin and A. Okounkov (see \cite{EskinOkounkov})-- the last paper studies asymptotic growth of the number of such coverings.

The second remark is on Ore's result that every element of $A_n$ is even. This result was strengthened by A.~Gleason in \cite{husemollercovers} and later byE. Bertram in \cite{bertramcycles} . 
The significance of this to coverings two-fold: Firstly, an $n$-cycle is transitive, and so we have the existence of a \emph{connected} extension. Second is that we have a very simple way of constructing a covering of a surface with one boundary component with specified cycle structure of the covering of the component, as follows:

First, the proof of Theorem \ref{sullthm} shows that the construction reduces to the case where $g=1,$ so that we are constructing a covering of a torus with a single perforation.

Suppose now that the permutation can be written as $\sigma\tau\sigma^{-1}\tau^{-1},$ where $\sigma$ is an $n$-cycle. This means that the "standard" generators of the punctured torus group go to $\sigma$ and $\tau,$ respectively. To construct the cover, then, take the standard square fundamental domain $D$ for the torus (the puncture is at the vertices of the square), then arrange $n$ of these fundamental domains in a row, and then a strip, by gluing the rightmost edge to the leftmost edge. Then, for each $i,$ the upper edge of the $i$-th domain from the left ($D_i$) is glued to the lower edge of $D_{\tau(i)}.$

\section{A generalization and a digression}
\label{dig}
The argument in the preceding section uses the following observation:
\begin{theorem}
\label{commutatorthm}
Let $G$ be group where every element $g\in G$ can be written as a commutator $g=hkh^{-1} k^{-1}.$ Then, any homomorphism from the fundamental group of the boundary of an orientable surface $S$ into $G$ can be extended to a hommorphism of $\pi_1(S)$ into $G.$
\end{theorem}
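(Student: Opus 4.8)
The plan is to reduce to the standard presentation of the fundamental group of a bounded orientable surface and then let the commutator hypothesis fill in the ``handle'' generators. Write $S$ as a compact orientable surface of genus $g$ with boundary components $\partial_1 S,\dots,\partial_k S$, $k\ge 1$. Fixing a basepoint and arcs to each boundary component, one has the classical presentation
\[
\pi_1(S)=\langle a_1,b_1,\dots,a_g,b_g,c_1,\dots,c_k \mid [a_1,b_1]\cdots[a_g,b_g]\,c_1\cdots c_k = 1\rangle,
\]
where $c_j$ is the loop around $\partial_j S$. Since $k\ge 1$ one may eliminate $c_k$, so $\pi_1(S)$ is in fact free on $\{a_i,b_i\}\cup\{c_1,\dots,c_{k-1}\}$, while $\pi_1(\partial S)=\pi_1(\partial_1 S)\ast\cdots\ast\pi_1(\partial_k S)$ is free on $c_1,\dots,c_k$ and maps to $\pi_1(S)$ (with the compatible choices of arcs) by $c_j\mapsto c_j$.

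Given a homomorphism $\phi\colon\pi_1(\partial S)\to G$, put $w_j:=\phi(c_j)$. To extend $\phi$ over $\pi_1(S)$ it is necessary and sufficient to choose images $\phi(a_i),\phi(b_i)\in G$ compatible with the single defining relation, i.e.
\[
[\phi(a_1),\phi(b_1)]\cdots[\phi(a_g),\phi(b_g)] = (w_1 w_2\cdots w_k)^{-1}.
\]
Indeed, keeping $\phi(c_j)=w_j$ for $j<k$, the value on $c_k$ is forced by the relation, and the displayed equation is exactly the demand that this forced value be the prescribed $w_k$. Assuming $g\ge 1$, apply the hypothesis on $G$ to the element $(w_1\cdots w_k)^{-1}$: write it as a single commutator $[h,\ell]$, and set $\phi(a_1):=h$, $\phi(b_1):=\ell$, and $\phi(a_i):=\phi(b_i):=e$ for $2\le i\le g$. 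Then the left-hand side above equals $[h,\ell]=(w_1\cdots w_k)^{-1}$, the relation is satisfied by the images of the generators, and the assignment therefore extends to a well-defined homomorphism $\pi_1(S)\to G$ restricting to $\phi$ on $\pi_1(\partial S)$.

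There is essentially no obstacle once the commutator hypothesis is available: a single handle absorbs the entire ``defect'' $(w_1\cdots w_k)^{-1}$, and the remaining handles (and boundary generators $c_1,\dots,c_{k-1}$) are free to map arbitrarily. The two points requiring care are bookkeeping rather than substance. First, one must fix the arcs from the basepoint to the boundary components so that the loops $c_j$ occurring in the presentation of $\pi_1(S)$ are literally the free generators of the factors of $\pi_1(\partial S)$ — this is what makes ``extend $\phi$'' meaningful. Second, the argument genuinely uses $g\ge 1$: when $S$ is planar the image of $c_k$ in $\pi_1(S)$ is forced to be $(c_1\cdots c_{k-1})^{-1}$, so $\phi$ extends precisely when in addition $w_1\cdots w_k=e$, which is exactly condition~(1) of Theorem~\ref{sullthm}; thus in the planar case the statement is to be read with that compatibility hypothesis. (One may also note the complementary remark that if some element of $G$ admits a commutator expression with a transitive pair of factors, the resulting extension can be taken connected — but that is a separate refinement, not needed for the construction here.)
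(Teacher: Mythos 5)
Your proof is correct and is essentially the argument the paper itself abstracts from its proof of Theorem \ref{sullthm}: use the one-relator presentation of $\pi_1(S)$, note it is free once a boundary generator is eliminated, and absorb the defect $(w_1\cdots w_k)^{-1}$ into a single commutator $[\phi(a_1),\phi(b_1)]$ with the remaining handle generators sent to $e$. You are also right to flag that the statement implicitly requires $g\ge 1$ (otherwise the compatibility condition $w_1\cdots w_k=e$ of Theorem \ref{sullthm}(1) is needed), a caveat the paper leaves unstated.
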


What is interesting,is how many groups $G$  satisfy the hypothesis of Theorem \ref{commutatorthm}. These include:
\begin{enumerate}
\item Finite and infinite alternating groups (see section \ref{finitecov}).
\item The infinite symmetric group (see section \ref{infinitecov} for application, and section \ref{transitiveore} for a strengthening.)
\item (Conjecturally) all finite non-abelian simple groups (see \cite{shalevcomm} for a comprehensive survey of what is known -- the conjecture is known to be true for almost all the groups in the census).
\item The group of homeomorphisms of $S^1$ (see \cite{eisenmann})
\item All semi-simple connected algebraic groups (see \cite{reesemisimp})
\item All semi-simple complex Lie groups (see \cite{leesemisimp})
\item The groups of order automorphisms of rationals and the reals. (see \cite{higmanperm,hollandlat,wielandtinf})
\item The groups of automorphisms of the random colored graphs\footnote{These are a generalization of ``the random graph''  $R$ of Erdos and Renyi, which has the property that any countable random  graph (obtained by selecting edges of the complete graph on $\omega$ vertices, each with probability $1/2$) is almost surely isomorphic to $R$}(\cite{trussgen}) and of the universal homogeneous partial ordering (\cite{glassmcclearyrubin,kusketruss})
\item The group of homeomorphisms of the Cantor set (follows from the result of \cite{kechrisrosendal} (see \cite{rokhlinprop}  for a nice survey and additional results, and see \cite{parrysullivan} for topological and dynamical applications) to the effect that this group has the Strong Rokhlin Property).
\end{enumerate}

These facts, together with Theorem \ref{commutatorthm} have diverse applications (to extending covers, to extending flat bundles, to constructing Seifert fibrations, and doubtlessly many other applications we are not aware of).

If a group is perfect and has the \emph{Bergman property} (that is, that the Cayley graph with respect to any generating set has finite diameter, see \cite{bergmanprop}), then a covering of the boundary extends to a covering of the surface of a sufficiently high genus.

\subsection{Infinite covers}
\label{infinitecov}
The real point of O.~Ore's paper \cite{orecomm} is to show that \emph{every} element of the \emph{infinite} symmetric group is a commutator. This, together with the argument used to show Theorem \ref{sullthm} shows
\begin{theorem}
\label{sullthminf}
Let $S$ be a surface of genus $g>0,$ with at least one boundary component. Then \emph{every} covering of $\partial S$ of infinite index extends to a covering of $S.$
\end{theorem}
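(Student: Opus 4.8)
The plan is to run the ``non-planar'' half of the proof of Theorem~\ref{sullthm}, but now with the decisive observation that in the infinite symmetric group $S_\Omega$ (with $\Omega$ infinite) the parity obstruction of part~(2) of that theorem has no analogue: by O.~Ore's theorem \cite{orecomm}, \emph{every} element of $S_\Omega$ is a commutator, not merely every even element.

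Concretely, I would first translate the problem as in Section~\ref{finitecov}. Since $\pi$ has fibers of constant cardinality, say $|\Omega|$, the covering $\pi\colon\widetilde C\to\partial S$ is encoded by a $k$-tuple of conjugacy classes $\Gamma_1,\dots,\Gamma_k$ in $S_\Omega$, one per boundary circle ($k=|\pi_0(\partial S)|$), and $\pi$ extends over $S$ precisely when there is a homomorphism $\Sigma\colon\pi_1(S)\to S_\Omega$ sending the loop around the $i$-th boundary component into $\Gamma_i$ for each $i$ (because $S$ has boundary, $\pi_1(S)$ is free and such a $\Sigma$ automatically integrates to a covering realizing the prescribed boundary). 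Using the standard presentation $\pi_1(S)=\langle a_1,b_1,\dots,a_g,b_g,c_1,\dots,c_k \mid \prod_{j=1}^{g}[a_j,b_j]\prod_{i=1}^{k}c_i=1\rangle$, in which the single relation may be solved for $c_k$, a homomorphism $\Sigma$ is freely determined by arbitrary choices $\alpha_j=\Sigma(a_j),\ \beta_j=\Sigma(b_j),\ \sigma_i=\Sigma(c_i)$ for $i<k$, and then $\Sigma(c_k)=\bigl(\prod_{j=1}^{g}[\alpha_j,\beta_j]\,\sigma_1\cdots\sigma_{k-1}\bigr)^{-1}$ is forced. So I would pick any representatives $\sigma_i\in\Gamma_i$ for $i<k$ and any $\tau\in\Gamma_k$, set $\alpha_j=\beta_j=e$ for $j\ge 2$, and — using $g\ge 1$ together with Ore's theorem — write the prescribed element $(\sigma_1\cdots\sigma_{k-1}\tau)^{-1}\in S_\Omega$ as a single commutator $[\alpha_1,\beta_1]$. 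This produces $\Sigma$, hence the extension. (Equivalently, as in the proof of Theorem~\ref{sullthm}, split $S$ along a separating curve into a genus-$g$ piece and a planar piece with $k$ boundary circles: the planar side imposes no constraint, and the connecting curve maps to an arbitrary element of $S_\Omega$, which is a commutator and therefore extends over the genus-$g$ side.)

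Once Ore's theorem is granted, there is essentially no obstacle here — only the bookkeeping above. The one point deserving care is the hypothesis that the fibers have \emph{constant} cardinality: this is exactly what ensures that all the $\Gamma_i$ live in a single symmetric group $S_\Omega$, so that the product $\sigma_1\cdots\sigma_{k-1}\tau$ is meaningful and Ore's theorem applies uniformly. Note also that, in contrast with the finite case, no planarity/parity dichotomy survives, and the transitivity refinement of Theorem~\ref{orethm} is \emph{not} needed for this bare existence statement; it is what one invokes when one additionally wants the total space $\widetilde S$ to be connected.
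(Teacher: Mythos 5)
Your proposal is correct and follows essentially the same route as the paper: the paper proves Theorem~\ref{sullthminf} by combining Ore's theorem (every element of the infinite symmetric group is a commutator) with the connected-sum/presentation argument from the proof of Theorem~\ref{sullthm}, exactly as you do. Your remark that the transitivity refinement of Theorem~\ref{orethm} is only needed for connectedness of the total space also matches the paper's discussion.
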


However, Ore's theorem does \emph{not} show the existence of a connected covering space. To show that, we need the stronger Theorem \ref{orethm}, which strengthens Ore's theorem to show that every element of $S_\infty$ can be taken to be the commutator of elements $g$ and $h$ which together act transitively on the set of sheets. 
\begin{theorem}
\label{orecorr}
The covering whose existence is shown by Theorem \ref{sullthminf} can be taken to be connected.
\end{theorem}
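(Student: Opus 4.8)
The plan is to reformulate ``connected'' as ``the monodromy acts transitively'' and then read off the required monodromy from Theorem \ref{orethm}. Recall (as already used in Section \ref{finitecov}) that a covering $\Pi\colon\widetilde S\to S$ is the same data as a permutation representation $\Sigma\colon\pi_1(S)\to S_\Omega$, and that $\widetilde S$ is connected exactly when $\Sigma(\pi_1(S))$ is a transitive subgroup of $S_\Omega$. The boundary covering $\pi$ records, for the loop $\gamma_j$ around the $j$-th boundary circle, the cycle type of $\Sigma(\gamma_j)$, i.e.\ a conjugacy class $\Gamma_j\subseteq S_\Omega$. So I must build $\Sigma$ with $\Sigma(\gamma_j)\in\Gamma_j$ for each $j$ and $\Sigma(\pi_1(S))$ transitive.

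First I would fix the standard presentation $\pi_1(S)=\langle a_1,b_1,\dots,a_g,b_g,\gamma_1,\dots,\gamma_k \mid \prod_{i=1}^g[a_i,b_i]\,\gamma_1\cdots\gamma_k=1\rangle$, which makes $\pi_1(S)$ free on $a_1,b_1,\dots,a_g,b_g,\gamma_1,\dots,\gamma_{k-1}$ with $\gamma_k$ determined by the relation. Choose any representatives $\sigma_j\in\Gamma_j$ and put $\rho=(\sigma_1\sigma_2\cdots\sigma_k)^{-1}$. Theorem \ref{orethm} lets me write $\rho=g_0h_0g_0^{-1}h_0^{-1}$ with $\langle g_0,h_0\rangle$ transitive on $\Omega$. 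Now define $\Sigma$ on the free generators by $a_1\mapsto g_0$, $b_1\mapsto h_0$, $a_i,b_i\mapsto e$ for $i\ge 2$, and $\gamma_j\mapsto\sigma_j$ for $j\le k-1$. The relation then forces $\Sigma(\gamma_k)$ to be a telescoping product that collapses to $\sigma_k$, so $\Sigma(\gamma_k)\in\Gamma_k$ and $\Sigma$ genuinely extends a covering equivalent to $\pi$. Finally $g_0=\Sigma(a_1)$ and $h_0=\Sigma(b_1)$ lie in $\Sigma(\pi_1(S))$, so the image contains the transitive group $\langle g_0,h_0\rangle$ and is therefore transitive; the associated cover is connected.

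I do not anticipate a real obstacle here: the substance is entirely in Theorem \ref{orethm}, and the only reason Theorem \ref{sullthminf} by itself is not enough is that its construction leaves $\Sigma$ of the relevant boundary (or connecting) loop an arbitrary commutator rather than a transitive one --- using a transitive commutator instead is precisely the upgrade Theorem \ref{orethm} provides, and the surface-gluing construction is indifferent to which commutator is used. The only things to be careful about are routine: getting the order of the factors in the surface relation right so the telescoping identity for $\Sigma(\gamma_k)$ really comes out (it helps that in $S_\Omega$ every permutation is conjugate to its inverse, so no orientation convention on the boundary loops can cause $\Gamma_k$ to be missed), and noting that the $k=1$ case is handled by the same argument --- write a representative of $\Gamma_1$ as a transitive commutator, send one handle there, and kill the remaining generators.
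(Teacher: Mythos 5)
Your proposal is correct and follows the same route the paper intends: the paper derives Theorem \ref{orecorr} directly from Theorem \ref{orethm} by plugging the transitive commutator into the same homomorphism-building argument used for Theorems \ref{sullthm} and \ref{sullthminf}, which is exactly what you do (with the connected-sum step of the paper replaced by the equivalent one-relator presentation). Your telescoping check for $\Sigma(\gamma_k)$ and the observation that the image of $\Sigma$ contains the transitive subgroup $\langle g_0,h_0\rangle$ supply the details the paper leaves implicit.
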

The proof of Theorem \ref{orethm} is completely explicit, and so a covering postulated by theorem \ref{orecorr} can be constructed explicitely (whatever that means for infinite covers). In addition, the proof of Theorem \ref{orethm} easily implies that:
\begin{theorem}
\label{extcount}
There is an \emph{uncountable} number of extensions of a given infinite covering.
\end{theorem}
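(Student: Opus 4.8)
The plan is to extract, from the explicit construction underlying Theorem \ref{orethm}, enough freedom to manufacture uncountably many pairwise inequivalent extensions. Recall that an extension of a given infinite covering $\pi:\widetilde C\to\partial S$ corresponds (by the dictionary of Section \ref{finitecov}) to a homomorphism $\Sigma:\pi_1(S)\to S_\Omega$ whose restriction to the peripheral subgroups realizes the prescribed conjugacy classes, and two extensions are equivalent precisely when the corresponding homomorphisms are conjugate in $S_\Omega$. So it suffices to produce an uncountable family of such homomorphisms, no two of which are conjugate. As in the proof of Theorem \ref{sullthminf}, we may reduce to a surface of genus $g\ge 1$ with one boundary component, where the peripheral element $\gamma$ is a product of $g$ commutators and we must write the prescribed permutation $\sigma_0=\Sigma(\gamma)$ as such a product.

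First I would isolate the source of the freedom. In the genus-$1$ case we must write $\sigma_0=[g,h]$; Theorem \ref{orethm} gives one such pair $(g,h)$ acting transitively, but its proof proceeds by a fairly unconstrained bookkeeping of orbits and so in fact produces a whole family. Concretely, I would argue that one may first conjugate the boundary data by an arbitrary permutation — but that only gives an $S_\Omega$-worth of equivalent covers, so that is not enough. The genuine freedom comes from the higher-genus slots: when $g\ge 2$ we may send the first handle pair to any commutator pair we like provided the remaining $g-1$ commutators multiply to the appropriate residual permutation, and Theorem \ref{orethm} (or already Ore's theorem) lets us absorb anything into the last handle. Thus I would fix all handles but one and vary, say, the element $h_1$ in the first handle over an uncountable family of permutations of $\Omega$ — for instance over an uncountable family of subsets $A\subseteq\Omega$, taking $h_1$ to be an involution supported on $A$ of a controlled cycle type — while adjusting the final handle by Theorem \ref{orethm} so that the total product remains $\sigma_0$ and the whole tuple acts transitively. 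In the genus-$1$ case one instead varies a parameter internal to the Ore construction itself (the proof builds $g$ and $h$ by a choice of auxiliary bijections between blocks; an uncountable set of such choices is available when $\Omega$ is infinite).

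The main obstacle — and the step I would spend the most care on — is the \emph{non-equivalence}: distinct choices of the varying parameter must yield non-conjugate homomorphisms. Conjugation in $S_\Omega$ is a drastic equivalence (the homomorphism can be pre-composed with any automorphism of $\pi_1(S)$? — no, only post-composed with inner automorphisms of $S_\Omega$), so I need a conjugacy invariant of the tuple $(\Sigma(a_1),\Sigma(b_1),\dotsc,\Sigma(a_g),\Sigma(b_g))$ that takes uncountably many values. A clean choice: the isomorphism type, as a permutation group, of the action of some distinguished element or pair on a distinguished orbit — or, more robustly, the multiset of cardinalities of cycles of $\Sigma(b_1)$ together with how they interlace with the cycles of $\Sigma(a_1)$, packaged as a countable "pattern" that is visibly conjugacy-invariant yet can be prescribed freely. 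If I design the varying family so that these patterns are all distinct (e.g. indexed by distinct subsets of $\mathbb N$), the $2^{\aleph_0}$ resulting covers are pairwise inequivalent. I would then remark that connectivity is preserved throughout (each tuple still generates a transitive subgroup, by the transitive form of Theorem \ref{orethm}), so in fact one gets uncountably many inequivalent \emph{connected} extensions, strengthening the statement in the spirit of Theorem \ref{orecorr}.
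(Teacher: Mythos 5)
Your overall strategy matches what the paper intends: Theorem \ref{extcount} is given no separate proof there, only the remark that it follows ``easily'' from the explicit construction in the proof of Theorem \ref{orethm}, and the freedom you identify is exactly the freedom the authors have in mind --- the conjugating element $a$ is constrained only to carry the cycles of $\tau$ bijectively onto those of $\sigma\tau$ (plus a transitivity normalization), which leaves an uncountable coset of the centralizer $C(\tau)$ to choose from. Where you go beyond the paper is in insisting that the resulting extensions be pairwise \emph{inequivalent}, i.e.\ non-conjugate homomorphisms, rather than merely distinct; the paper is silent about which notion of ``number of extensions'' it is counting, and under the literal reading (distinct homomorphisms extending the given boundary data) uncountability is immediate from the choice of $a$, with no invariant needed. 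Your genus $\ge 2$ mechanism --- park a permutation $b_1$ of prescribed cycle type on an otherwise trivial handle and absorb the boundary permutation into the last handle via Theorem \ref{orethm} --- is a clean and correct way to get $2^{\aleph_0}$ pairwise non-conjugate tuples, since cycle types of permutations of a countably infinite set form an uncountable, manifestly conjugacy-invariant family; and since transitivity is retained, you do get the strengthening of Theorem \ref{orecorr} you mention.

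The one genuine soft spot is the genus-one case under your stronger (up-to-conjugacy) reading. There your distinguishing invariant is not available for free: every admissible $a$ lies in a single coset $a_0C(\tau)$, and you must still show that $C(\tau)$ acting by conjugation on that coset has uncountably many orbits of pairs $(\tau,a)$. ``An uncountable set of auxiliary bijections is available'' does not by itself rule out that these collapse into only countably many conjugacy classes of pairs --- note that a crude cardinality count cannot settle this, since each conjugacy class of pairs can itself have cardinality $2^{\aleph_0}$. This is fixable: the cycle type of $a$ (or of a word such as $a\tau$) is a conjugacy invariant of the pair, and one can arrange for it to take uncountably many values by varying how $a$ permutes the cycles of $\tau$ among the cycles of $\sigma\tau$; but as written this step is asserted rather than proved. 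If you claim only uncountably many extensions as homomorphisms --- which is all the paper's statement literally requires --- your argument is already complete.
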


\subsection{Non-orientable surfaces}
Our results extend \emph{verbatim} to coverings of non-orientable surfaces as follows:
\begin{theorem}
\label{sullthminfn}
Let $S$ be a non-orientable of genus $g>0,$ with at least one boundary component. Then \emph{every} covering of $\partial S$ of infinite index extends to a \emph{connected} covering of $S.$
\end{theorem}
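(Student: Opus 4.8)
The plan is to reduce the non-orientable case to the machinery already developed for orientable surfaces. Recall that a non-orientable surface of genus $g$ with at least one boundary component has fundamental group freely generated by elements $c_1, \dotsc, c_g, \gamma_1, \dotsc, \gamma_{k-1}$, where the $c_i$ are the ``crosscap'' generators and the $\gamma_j$ are generators of all but one of the boundary components; the remaining boundary component $\gamma_k$ satisfies the single relation $\gamma_1 \dotsc \gamma_{k-1} \gamma_k = c_1^2 \dotsc c_g^2$ in the closed-up surface, so that as in the proof of Theorem~\ref{sullthm} the extension problem amounts to expressing a prescribed product of boundary permutations as a product of $g$ squares in $S_\Omega$. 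Thus, because the free group on the $\gamma_j$ already absorbs the data on $k-1$ of the boundary components with no constraint, everything comes down to a single element $\sigma \in S_\Omega$ (the inverse of the product of the prescribed boundary permutations, spread across the connected-summing circle exactly as in the orientable proof), and we must write $\sigma = \tau_1^2 \cdots \tau_g^2$ with $\tau_1, \dotsc, \tau_g$ generating a transitive subgroup.

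First I would invoke Theorem~\ref{powers}, which (per the introduction) extends the results of \cite{Drosteuniv} to the transitive context: every element of $S_\Omega$ is a product of $g$ squares (indeed, already a single square suffices when $g \ge 1$, but we only need the weak form) of elements which together act transitively on $\Omega$. Granting that, one sets the crosscap generators $c_i := \tau_i$ and the surplus boundary generators $\gamma_j$ to any permutations one pleases (say the identity, or one transitive element if extra transitivity is wanted for free), obtaining a homomorphism $\Sigma \colon \pi_1(S) \to S_\Omega$ whose restriction to $\partial S$ realizes the given boundary covering and whose image is transitive because the $\tau_i$ already generate a transitive subgroup. Connectedness of the associated covering space is then exactly transitivity of $\Sigma(\pi_1(S))$, which is immediate. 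The uncountability statement analogous to Theorem~\ref{extcount} would follow the same way from the explicit, flexible nature of the construction in Theorem~\ref{powers}, though the present theorem only asserts existence of a connected extension.

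The one point requiring genuine care is the handling of the genus-one non-orientable case, $g = 1$, where $S$ is (after stripping the planar part) a punctured Klein bottle or rather a punctured projective plane: here one has only a single crosscap generator, so $\sigma$ must be a \emph{single} square $\tau^2$ with $\langle \tau \rangle$ transitive, i.e. $\tau$ must be a transitive permutation whose square is $\sigma$. Not every $\sigma$ is a square of a transitive element for finite $\Omega$ (parity and cycle-type obstructions), but for infinite $\Omega$ Theorem~\ref{powers} is precisely what removes these obstructions; the main obstacle in writing the proof is therefore to make sure the reduction genuinely lands in the hypotheses of Theorem~\ref{powers} and does not secretly require more squares than the genus provides. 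Once that is checked, the rest is a verbatim transcription of the orientable argument with ``product of commutators'' replaced by ``product of squares'' and Theorem~\ref{orethm} replaced by Theorem~\ref{powers}.
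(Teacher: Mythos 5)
Your overall route is the paper's: peel off the planar part, reduce the extension problem to writing a single permutation $\sigma$ (the inverse of the product of the boundary monodromies) as a product of $g$ squares whose factors, together with the boundary monodromies, generate a transitive subgroup, and then appeal to Theorem~\ref{powers}. Two steps do not survive scrutiny. First, the transitivity clause of Theorem~\ref{powers} is conditional: it holds only \emph{if $\sigma$ moves infinitely many elements of $\Omega$}. An infinite-index covering of $\partial S$ can perfectly well produce a $\sigma$ of finite support (almost all sheets trivially covered over each boundary circle), or even $\sigma=e$; in that case Theorem~\ref{powers} still supplies the squares but says nothing about transitivity, so connectedness is not established. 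This is precisely the case the paper handles by additionally invoking Theorem~\ref{dglemma} with $k=2$: a permutation with no infinite cycles is $c_1^2c_2^2$ with $c_1,c_2$ \emph{transitive} cycles. Your argument omits this case entirely, and it is the half of the proof that Theorem~\ref{powers} cannot supply.

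Second, your disposal of the genus-one case is wrong. Theorem~\ref{powers} expresses $\sigma$ as $\prod_i\alpha_i^{n_i}\prod_j\beta_j^{l_j}$ with at least one $\alpha$ and at least one $\beta$; even in its most economical form it yields \emph{two} squares, never one, so it is not ``precisely what removes'' the single-square obstruction --- nothing in the paper does. And the obstruction is genuine: for the M\"obius band ($g=1$, one boundary component) the boundary class is the square of the crosscap generator, and a cyclic group $\langle\tau\rangle$ acting transitively on a countably infinite set forces $\tau$ to be a single infinite cycle, whence $\tau^2$ is a product of exactly two infinite cycles. A boundary covering whose monodromy is a single infinite cycle therefore does \emph{not} extend to a connected covering of the M\"obius band. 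So the $g=1$ case cannot be ``checked'' as you hope; it fails. (The theorem as stated, and the paper's own proof --- which likewise only produces two squares --- share this defect, but your write-up positively asserts the case is handled, which it is not.)
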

\begin{proof}
Let $p$ be the number of boundary components. For such a covering to extend, the product of the monodromies of the boundary components must be a product of at most $p+1$ squares $\beta_1, \dotsc, \beta_{p+1}$, and the monodromies together with $\beta_1, \dotsc, \beta_{p+1}$ must generate a transitive subgroup. This, however, is the content of Theorem \ref{powers} (with $k=m=1$) and Theorem \ref{dglemma} (with $k=2$).
\end{proof}
\begin{proof}[A simple quasi-Proof]
\label{threesquares}
A simple proof of Theorem \ref{sullthminfn} for surfaces of genus greater than $1$ comes from the following fact (see \cite{lyndonnewman}): for  any two elements $x, y$ in a group,
\[
[x, y] = x y x^{-1} y^{-1} = (xy)^ 2(y^{-1} x^{-1} y)^2 (y^{-1})^2.
\]
Since the subgroup generated by $xy$ and $y^{-1}$ includes all of $\langle x, y\rangle,$ it follows that if the latter acts transitively, so does the former. We can now use Theorem \ref{orethm} to finish the argument. The main result of \cite{lyndonnewman} is that not every commutator can be expressed with \emph{fewer} than three squares, so this method really does not extend to genus $1.$
\end{proof}

\subsection{Infinite planar covers}
Our results for infinite planar covers are a bit less satisfying. The question is obviously trivial when the surface is a disk, and when the surface is a punctured disk (a cylinder), it is clear that the only connected infinite cover is the infinite cyclic cover, and so the only monodromy which extends is that of an infinite transitive cycle (the same on both boundary components). We now skip over the thrice-punctured sphere, to state a somewhat satisfying result for spheres with four or more punctures. That follows from the following result of the first-named author (\cite{Drosteprod}):
\begin{theorem}
\label{droste83}
Let $P_1, \dots, P_k, \quad k>3$ be conjugacy classes in $S_\infty.,$ each of which has infinite support (that is, moves an infinite number elements). Then, there are representatives $p_1 \in P_1, \dotsc, p_k \in P_k,$ such that $p_1 \centerdot p_2 \centerdot \dots p_k = e.$
\end{theorem}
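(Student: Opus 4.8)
The plan is to reduce Theorem~\ref{droste83} to a statement about products of just \emph{two} conjugacy classes, after which the general case follows by a short induction on $k$. Let $\Omega$ be the (countably infinite) set on which $S_\infty$ acts; recall that conjugacy classes in $S_\infty$ are exactly cycle types, and let $\Theta$ be the class of the ``generic'' permutation, namely one all of whose non-fixed orbits are bi-infinite, there being $\aleph_0$ of these, together with $\aleph_0$ fixed points. Note that $\Theta\neq\emptyset$, that $\Theta$ has infinite support, and that $\Theta=\Theta^{-1}$ (the inverse of a bi-infinite cycle is again a bi-infinite cycle, and fixed points are unchanged). Everything will follow from what I will call the \emph{Core Lemma}: if $P$ and $Q$ are conjugacy classes of $S_\infty$, each of infinite support, then $\Theta\subseteq P\cdot Q$ --- that is, every permutation of type $\Theta$ can be written as a product $pq$ with $p\in P$ and $q\in Q$.

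Granting the Core Lemma, the theorem follows by induction on $k\geq 4$. For $k=4$, apply the Core Lemma twice: $\Theta\subseteq P_2\cdot P_3$ and $\Theta\subseteq P_4\cdot P_1$. Pick any $\sigma\in\Theta$; then $\sigma^{-1}\in\Theta$ as well, so there are $p_i\in P_i$ with $\sigma=p_2p_3$ and $p_4\in P_4,\ p_1\in P_1$ with $\sigma^{-1}=p_4p_1$, whence
\[
p_1p_2p_3p_4 \;=\; p_1\,\sigma\,p_4 \;=\; p_1(p_4p_1)^{-1}p_4 \;=\; p_1p_1^{-1}p_4^{-1}p_4 \;=\; e.
\]
For $k\geq 5$, the Core Lemma gives $\Theta\subseteq P_1\cdot P_2$, and $\Theta$ is a conjugacy class of infinite support; applying the inductive hypothesis to the $k-1\geq 4$ classes $\Theta,P_3,\dots,P_k$ yields $r\in\Theta$ and $p_i\in P_i$ ($3\leq i\leq k$) with $r\,p_3\cdots p_k=e$, and writing $r=p_1p_2$ with $p_1\in P_1,\ p_2\in P_2$ completes the induction. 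This is exactly where the hypothesis $k>3$ is used: it permits cutting the product into at least two blocks, each of which the Core Lemma expands to the flexible, inversion-invariant class $\Theta$, and two such blocks (one the inverse of the other) cancel. For $k=3$ one is instead forced to ask whether the single product $P_1\cdot P_2$ meets the rigidly fixed class $P_3^{-1}$, which for suitable infinite-support classes it does not.

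It remains to prove the Core Lemma, which is the real content --- essentially the main result of~\cite{Drosteprod} --- and is the step I expect to be the main obstacle. Fix $\theta$ of type $\Theta$ and decompose $\Omega=\mathrm{Fix}(\theta)\sqcup\bigsqcup_{m\in\natls}C_m$, where $\mathrm{Fix}(\theta)$ is infinite and each block $C_m=\{c^m_i:i\in\integers\}$ carries the bi-infinite cycle $\theta(c^m_i)=c^m_{i+1}$. Since conjugacy classes are closed under inversion, writing $\theta=pq$ with $p\in P$, $q\in Q$ is the same as building a permutation $p$ of type $P$ such that $p^{-1}\theta$ has type $Q$. The idea is to construct $p$ against this array of blocks by a back-and-forth procedure: because there are infinitely many bi-infinite blocks and infinitely many spare fixed points, one can interleave (i) laying down, one orbit at a time, the orbits of $p$ of the lengths dictated by the type $P$ --- each living inside one or two blocks --- with (ii) keeping track that the orbits of $p^{-1}\theta$ produced so far can still be completed to the type $Q$. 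The underlying manipulations are the elementary ``shift and splice'' moves already used in Section~\ref{finitecov} to build the cover of the once-punctured torus. The genuine difficulty is the bookkeeping in the ``thin'' cases --- when $P$ or $Q$ has only finitely many infinite cycles, or consists of infinitely many cycles of bounded length --- for then the number of fixed points, and the number of infinite cycles, of $p$ (respectively of $p^{-1}\theta$) is not free but pinned to an exact value ($0$, some finite value, or $\aleph_0$); one must drive the construction so that neither factor over- or under-shoots, and making this work uniformly requires a case analysis on the coarse shape of the pair $(P,Q)$. That analysis is precisely what is carried out in~\cite{Drosteprod}; once the Core Lemma is in hand, the reduction above is purely formal.
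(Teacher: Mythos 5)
The paper does not actually prove Theorem~\ref{droste83}: it is quoted as a known result of the first author, with the proof living entirely in \cite{Drosteprod}. Your proposal has the same character. The formal skeleton you build is correct and rather elegant: the class $\Theta$ is indeed inversion-closed, the $k=4$ cancellation $p_1\sigma p_4 = p_1(p_4p_1)^{-1}p_4=e$ is right, and the induction from $k-1$ to $k$ via $\Theta\subseteq P_1\cdot P_2$ is valid. But all of the mathematical content has been pushed into your ``Core Lemma,'' which you do not prove --- you sketch a back-and-forth construction, concede that the hard cases require ``a case analysis on the coarse shape of the pair $(P,Q)$,'' and then cite \cite{Drosteprod} for that analysis. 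Since the Core Lemma is essentially the main theorem of the very paper being cited, what you have written is a reduction of the citation to itself, not an independent proof. That is a genuine gap.

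A second, more specific problem: the Core Lemma as you state it (for \emph{arbitrary} conjugacy classes $P,Q$ of infinite support, with no hypothesis about infinite cycles) is strictly stronger than anything quoted in this paper. Theorem~\ref{thmA} gives $\Theta=\Theta^{-1}\subseteq P\cdot Q$ only when $P$ and $Q$ each contain at least one infinite cycle in their decomposition; your induction, however, needs $\Theta\subseteq P_i\cdot P_j$ for classes such as ``infinitely many $2$-cycles and nothing else,'' which have infinite support but no infinite cycle. So even granting everything stated elsewhere in the paper, the key step is not covered. To close the gap you would need either to prove the Core Lemma in that generality (the block-pairing constructions of Sections~\ref{infinite} and \ref{finitecycle} are a reasonable starting point, but the ``thin'' cases you flag are exactly where the work is), or to rearrange the induction so that every two-class product you invoke has at least one factor containing an infinite cycle --- for instance by always keeping $\Theta$ itself as one of the two factors being multiplied, which Theorem~\ref{thmA} does license.
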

\begin{corollary}
\label{drostecor}
Given a covering of the disjoint union of  $k$ circles  with monodromy given by $P_1, \dots, P_k$ satisfying the conditions of Theorem \ref{droste83}, there is a covering of the $k$-punctured sphere with that monodromy.
\end{corollary}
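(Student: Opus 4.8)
The plan is to deduce Corollary \ref{drostecor} directly from Theorem \ref{droste83} via the standard dictionary between coverings and permutation representations recalled at the start of Section \ref{finitecov}. First I would set up the topology: the $k$-punctured sphere $S$ is homotopy equivalent to a wedge of $k-1$ circles, so its fundamental group is free on generators $\gamma_1,\dotsc,\gamma_{k-1}$, where $\gamma_i$ is a loop around the $i$-th puncture, and the loop $\gamma_k$ around the last puncture satisfies the single relation $\gamma_1\gamma_2\cdots\gamma_k = e$ (after orienting the boundary circles consistently). The covering of the boundary $\partial S$, which here is the disjoint union of $k$ small circles around the punctures, is recorded by the conjugacy classes $P_1,\dotsc,P_k$ in $S_\Omega$, $\Omega$ the (infinite) fiber; a covering of $S$ restricting to it exists exactly when there is a homomorphism $\rho\colon \pi_1(S)\to S_\Omega$ with $\rho(\gamma_i)\in P_i$ for all $i$ (for the unfilled-in $i\le k-1$ this is automatic since those generators are free, but we need all $k$ simultaneously). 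Because $\pi_1(S)$ is free on $\gamma_1,\dotsc,\gamma_{k-1}$, specifying $\rho(\gamma_1)=p_1,\dotsc,\rho(\gamma_{k-1})=p_{k-1}$ determines $\rho$, and then $\rho(\gamma_k)=(p_1\cdots p_{k-1})^{-1}=p_k^{-1}$ where $p_k := p_1 p_2\cdots p_{k-1}p_k \cdot p_k^{-1}$ — more cleanly, $\rho(\gamma_k)$ is forced to be $(p_1\cdots p_{k-1})^{-1}$, and we need this to lie in $P_k$.

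The key step is therefore exactly the combinatorial assertion: given conjugacy classes $P_1,\dotsc,P_k$ of infinite support with $k>3$, one can choose $p_i\in P_i$ with $p_1 p_2\cdots p_k = e$. This is precisely Theorem \ref{droste83}, which I am permitted to assume. Given such representatives, set $\rho(\gamma_i)=p_i$ for $i\le k-1$; then the relation $\gamma_1\cdots\gamma_k=e$ forces $\rho(\gamma_k)=(p_1\cdots p_{k-1})^{-1}=p_k$ (using $p_1\cdots p_k=e$), which indeed lies in $P_k$. Hence $\rho$ is a well-defined permutation representation of $\pi_1(S)$ realizing the prescribed conjugacy classes on all $k$ boundary loops, and by the covering-space correspondence it corresponds to a covering $\Pi\colon\widetilde S\to S$ whose restriction to $\partial S$ is the given covering. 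One should note a minor orientation bookkeeping point: the conjugacy class assigned to a boundary component by the covering is that of the image of its \emph{oriented} boundary loop, and since $P_k$ and $P_k^{-1}$ coincide as conjugacy classes in $S_\Omega$ when... in fact in general $P^{-1}$ need not equal $P$, so one should either absorb the inversion into the statement of Theorem \ref{droste83} (replace $P_k$ by $P_k^{-1}$, which has the same infinite-support hypothesis) or observe that the boundary orientations in Question \ref{coverq} may be chosen so that the relation reads $\gamma_1\cdots\gamma_k = e$ with each $\gamma_i$ the class of the given boundary monodromy.

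The main obstacle is not in this deduction at all — it is entirely formal once Theorem \ref{droste83} is in hand — but rather lies in the hypothesis $k>3$ and the infinite-support condition, which is why the statement is a corollary rather than a theorem covering all cases: the thrice-punctured sphere is genuinely overdetermined (three free choices but a rigid triple product relation, and even finite instances of this, governed by Theorem \ref{frob}, can fail), and a class of finite support forces the cover to be nontrivial on only finitely many sheets, reintroducing the delicate finite-permutation obstructions of Theorem \ref{sullthm}. I would close by remarking explicitly that connectivity of the total space is \emph{not} claimed here — unlike Theorems \ref{orecorr} and \ref{sullthminfn} — since Theorem \ref{droste83} produces representatives with a product relation but says nothing about whether $\langle p_1,\dotsc,p_k\rangle$ acts transitively on $\Omega$; obtaining a connected planar infinite cover would require a transitive refinement of Theorem \ref{droste83}, which is outside the scope of this corollary.
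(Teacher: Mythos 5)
Your proof is correct and follows exactly the route the paper intends: the corollary is immediate from the planar case of Theorem \ref{sullthm} (a covering of a planar surface's boundary extends iff representatives of the classes multiply to the identity, since $\pi_1(S)$ is free on $k-1$ of the boundary loops) combined with Theorem \ref{droste83}, which supplies the required representatives. Your added remarks on orientation conventions and on the absence of any connectedness claim match the paper's own caveats.
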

Notice that the statement of Corollary \ref{drostecor} makes no comment on the connectedness of the resulting cover. It is fairly clear that some in-finiteness hypothesis is necessary (since, eg, if \emph{all} the conjugacy classes have finite support, we are very close to the finite situation, where no uniformly positive answer exists, but it seems plausible that the hypothesis that \emph{all} of $P_1, \dots, P_k$ have finite support is too strong.

Finally, we are in the case $k=3.$ Here, we can use Theorem A of \cite{Drosteprod}, which states the following:
\begin{theorem}
\label{thmA}
Let $S, P_1, P_2$ be conjugacy classes which all have infinite support and such that $P_1$ and $P_2$ contain at least one infinite cycle in their decomposition. Then, there are representatives $s, p_1, p_2$ in $S, P_1, P_2$ respectively, such that $sp_1 p_2 = e.$
\end{theorem}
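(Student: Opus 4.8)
The plan is to convert the equation $sp_1p_2=e$ into a statement about the cycle type of a product and then to use the two prescribed infinite cycles as a flexible engine. First the reduction: since $\Omega$ is countably infinite, every permutation has the same cycle type as its inverse, so $S=S^{-1}$ as a conjugacy class. Given $p_1\in P_1$ and $p_2\in P_2$, the equation forces $s=(p_1p_2)^{-1}$, and this element lies in $S$ precisely when $p_1p_2\in S$. Thus it suffices to produce $p_1\in P_1$ and $p_2\in P_2$ with $p_1p_2$ of the cycle type of $S$. Write $\lambda_1,\lambda_2,\mu$ for the cycle types of $P_1,P_2,S$; by hypothesis each of $\lambda_1,\lambda_2$ contains at least one infinite cycle, and $\mu$ has infinite support, so $\mu$ has an infinite cycle, or arbitrarily long finite cycles, or infinitely many cycles of some one fixed finite length.

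Next I would isolate a building-block lemma. On a countably infinite set $A$, composing an infinite cycle with the transposition of two of its points lying at ``distance'' $k$ splits off a $k$-cycle and leaves an infinite cycle; these cuts can be iterated and interleaved along a single bi-infinite orbit without interfering with one another; and, with some bookkeeping, the product of a suitably chosen infinite cycle and a suitably chosen infinite cycle on $A$ can be arranged to have any cycle type on $A$ of infinite support. This is the infinite-set substitute for the Euler-characteristic (Hurwitz) parity constraint that obstructs the analogous finite assertions: on an infinite orbit there is no parity, and an infinite cycle is an inexhaustible reservoir of arcs of every finite length and of bi-infinite arcs.

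For the assembly, reserve one infinite cycle inside each of $\lambda_1$ and $\lambda_2$. Partition $\Omega$ into $p_1$- and $p_2$-invariant gadgets that realize $\mu$ cycle by cycle — each finite $\mu$-cycle produced from short matching arcs of $p_1$ and $p_2$ glued head to tail, each infinite $\mu$-cycle from bi-infinite arcs of each; whatever cycles of $p_1$ (respectively $p_2$) are still missing from $\lambda_1$ (respectively $\lambda_2$) are then paid for out of the reserved infinite cycle, the lemma supplying arcs of the needed lengths while keeping the type of the composite on that region equal to a corresponding piece of $\mu$ — a piece that exists because $\mu$ has infinite support. A back-and-forth enumeration of $\Omega$ guarantees that the three partial permutations are everywhere defined and are genuine bijections.

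The main obstacle is the bookkeeping in the extreme cases: when $\mu$ has very few cycles — say a single infinite cycle together with only finitely many finite cycles and fixed points — so that the composite $p_1p_2$ has almost no slack, and dually when $\lambda_1$ or $\lambda_2$ carries a large finite part that must be realized exactly. One must check that even then the single reserved infinite cycle of each of $P_1$ and $P_2$ suffices; this reduces to a finite case analysis over the coarse shapes of $\mu,\lambda_1,\lambda_2$ (how many infinite cycles, how many fixed points, and how large a finite part each has) together with the verification that the parity-type constraint in the lemma is never an obstruction — exactly the role of the infinite-support hypothesis on $S$. The hypotheses are then visibly sharp: remove an infinite cycle from $P_1$ and the engine is gone, while if $S$ has finite support the composite has no room for the padding and one is back in the finite situation, where no unconditional statement can hold.
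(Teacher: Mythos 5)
The paper does not actually prove this statement itself --- it is imported verbatim as Theorem A of \cite{Drosteprod} --- so your argument has to stand on its own, and as written it has a genuine gap at its core. Your reduction is fine: cycle type determines conjugacy in $S_\infty$ and is preserved by inversion, so $S^{-1}=S$ and it suffices to realize the cycle type $\mu$ of $S$ as a product $p_1p_2$ with $p_i\in P_i$. But everything after that rests on the ``building-block lemma,'' which is essentially the positive half of Bertram's theorem on products of infinite cycles (the negative half is exactly what the paper invokes via \cite{Bertraminf} in Theorem \ref{bertthm}): that two infinite cycles can be chosen so that their product has any prescribed cycle type of infinite support. The mechanism you offer for it --- composing an infinite cycle with transpositions of points at distance $k$ to split off $k$-cycles, then iterating and interleaving the cuts --- does not deliver it: that procedure expresses the target as (infinite cycle)$\cdot$(product of disjoint transpositions), and the second factor is an involution, not an infinite cycle and not a member of $P_2$. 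Converting that correction term into a single infinite cycle, or into a permutation of the exact type $\lambda_2$, is where all the work lies, and no construction is given.

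The assembly step has the same character. A ``gadget'' invariant under both $p_1$ and $p_2$ must be a union of whole cycles of each, and demanding that the composite on each gadget be a prescribed piece of $\mu$ while the $p_1$- and $p_2$-cycles inside it have prescribed lengths is precisely the original problem, localized; saying that leftover cycles are ``paid for out of the reserved infinite cycle'' and that the extreme cases reduce to ``a finite case analysis'' defers exactly the configurations where the hypotheses do their work (e.g.\ $\mu$ consisting of one infinite cycle plus finitely many finite cycles, or $\lambda_1$ carrying a large rigid finite part). To make this a proof you should either establish the two-infinite-cycles lemma honestly --- by an explicit construction of two interleaved bi-infinite orbits whose composite is a given permutation of infinite support, in the spirit of the $\mathbb{Z}^2$-labelling arguments of Sections \ref{infinite} and \ref{finitecycle} --- or cite it outright; with that lemma in hand your outline can be completed, but without it the proposal is a plan rather than a proof.
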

We have the immediate corollary:
\begin{corollary}
\label{drostecor2}
Given a covering of the disjoint union of $3$ circles satisfying the conditions of Theorem \ref{thmA}, such a covering extends to a covering of the thrice-punctured sphere.
\end{corollary}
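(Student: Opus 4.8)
The plan is to reduce the statement, exactly as in the planar case of Theorem~\ref{sullthm}, to the purely group-theoretic assertion of Theorem~\ref{thmA}. Recall that a degree-$|\Omega|$ covering of a space $Y$ is the same datum as a conjugacy class of permutation representations $\pi_1(Y)\to S_\Omega$, and that the restriction of such a covering to a peripheral loop $\gamma$ has monodromy the image of $\gamma$. So the given covering of the three boundary circles is recorded by three conjugacy classes in $S_\Omega$ which, reordering if necessary so as to match the notation of Theorem~\ref{thmA}, we call $S, P_1, P_2$, with $S$ attached to the third boundary loop $\gamma_3$ and $P_1, P_2$ to $\gamma_1, \gamma_2$.

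First I would recall that $\pi_1$ of the thrice-punctured sphere is the free group on $\gamma_1, \gamma_2$, the third peripheral loop being (up to conjugacy and the usual orientation convention) $\gamma_3=(\gamma_1\gamma_2)^{-1}$. Hence, to build an extension $\Pi\colon\widetilde{S}\to S$ of the prescribed boundary covering it suffices to produce permutations $p_1\in P_1$, $p_2\in P_2$ with $(p_1p_2)^{-1}\in S$: one then sets $\Sigma(\gamma_1)=p_1$, $\Sigma(\gamma_2)=p_2$, which is unambiguous since $\gamma_1,\gamma_2$ freely generate, and automatically $\Sigma(\gamma_3)=(p_1p_2)^{-1}\in S$, so $\Sigma$ has exactly the required peripheral monodromy. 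Since a permutation and its inverse have the same cycle type, the class $S$ is closed under inversion, so ``$(p_1p_2)^{-1}\in S$'' is equivalent to the existence of $s\in S$ with $sp_1p_2=e$.

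That last existence is precisely Theorem~\ref{thmA}, whose hypotheses --- $S$, $P_1$, $P_2$ all of infinite support and $P_1$, $P_2$ each containing an infinite cycle --- are exactly ``the conditions of Theorem~\ref{thmA}'' assumed in the statement. Applying it yields $s\in S$, $p_1\in P_1$, $p_2\in P_2$ with $sp_1p_2=e$; feeding $p_1,p_2$ into the recipe of the previous paragraph produces the desired covering of the thrice-punctured sphere.

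There is essentially no obstacle here beyond bookkeeping: the real work lies in Droste's Theorem~\ref{thmA}, and this corollary is merely the translation dictionary between covers and permutation representations, as in Theorem~\ref{sullthm}. The only points requiring any care are the standard orientation and base-point conventions identifying the three peripheral loops with $\gamma_1,\gamma_2,(\gamma_1\gamma_2)^{-1}$, and the (trivial) remark that conjugacy classes in $S_\Omega$ are invariant under inversion, so the asymmetry between $s$ and $s^{-1}$ is immaterial. As with Corollary~\ref{drostecor}, no claim is made --- nor is any available by this argument --- about the connectedness of the resulting total space.
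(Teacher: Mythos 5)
Your proposal is correct and matches the paper's intent exactly: the paper treats this as an ``immediate corollary'' of Theorem~\ref{thmA} via the same translation between boundary coverings and conjugacy classes used in the planar case of Theorem~\ref{sullthm}, and you have simply written out that bookkeeping (free generation of $\pi_1$ of the thrice-punctured sphere, closure of conjugacy classes under inversion) explicitly. No gaps.
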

Again, we make no statement regarding connectedness. In the case of $k=3$ we also have the following negative result:
\begin{theorem}
\label{bertthm}
There is no covering of the thrice punctured sphere which is infinite cyclic over two of the components and such that the third component is covered finitely, with an odd number of components of even index.
\end{theorem}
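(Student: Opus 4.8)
The plan is to translate the statement into permutation group theory and then establish a parity obstruction that is the infinite analogue of the elementary fact that in $S_n$ a product of two $n$-cycles is an even permutation.

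First I would set up the monodromy. A covering of the thrice--punctured sphere $S$ is the same as a homomorphism $\pi_1(S)=F_2\to S_\Omega$, i.e.\ a triple $\sigma_1,\sigma_2,\sigma_3\in S_\Omega$ with $\sigma_1\sigma_2\sigma_3=e$, where $\sigma_i$ is the monodromy around the $i$-th puncture. The hypotheses become: $\Omega$ is countably infinite, $\sigma_1$ and $\sigma_2$ are each a single bi-infinite cycle (``infinite cyclic over two components''), and $\sigma_3$ has all cycles finite (``the third component is covered finitely'') with an odd number $j$ of cycles of even length; I want a contradiction. I would also record the easy remark that such a covering is automatically connected: since the monodromy around the first puncture is a single infinite cycle, the preimage of a small punctured-disc neighbourhood of that puncture is connected, and being nonempty it meets every component of the covering (each of which surjects onto $S$), forcing the covering to be connected.

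Next, the core parity computation. Conjugate $\Omega$ so that $\sigma_1=s$ is the standard shift $n\mapsto n+1$ on $\mathbb{Z}$; then $\sigma_2=s^{-1}\sigma_3^{-1}$. Write $\sigma_3^{-1}=\rho_E\rho_O$, where $\rho_E$ is the product of the $j$ even-length cycles (finitely supported, with $\operatorname{sgn}(\rho_E)=(-1)^j$) and $\rho_O$ is the product of the odd-length cycles. I would then show that $\sigma_2=(s^{-1}\rho_E)\rho_O$ must have an odd number of finite cycles, hence cannot be a single bi-infinite cycle. The mechanism is the following book-keeping lemma: any permutation of $\mathbb{Z}$ with exactly one infinite cycle and finitely many finite cycles, multiplied by a transposition, again has exactly one infinite cycle, and its number of finite cycles changes by exactly $\pm1$ (a transposition splits one cycle into two or merges two into one; splitting the infinite cycle yields one finite plus one infinite cycle). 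Starting from $s^{-1}$ (no finite cycles) and writing $\rho_E$ as a product of $r\equiv j\pmod 2$ transpositions gives that $s^{-1}\rho_E$ has an odd number of finite cycles; since each odd-length cycle of $\rho_O$ is a product of an even number of transpositions, multiplying further by $\rho_O$ changes that count by an even amount, so it remains odd.

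The step that needs the most care — and the one I expect to be the main obstacle — is the last one when $\rho_O$ is an infinite product of cycles, i.e.\ when the third boundary circle has infinitely many components of odd index, since one cannot then insert transpositions one at a time. I would handle this by localization: approximate $\rho_O$ by the finite subproducts $\rho_O^{[k]}$ formed from its first $k$ cycles (in an enumeration reaching every point), apply the finite count to $s^{-1}\rho_E\rho_O^{[k]}$, and check that for $k$ large this permutation and $\sigma_2$ have the same finite set of finite cycles, so the odd parity of the count passes to the limit. A geometric variant of the whole argument: fill in the branch points over the third puncture (legitimate since those cycles are finite), obtaining a connected branched cover $\widehat S\to\mathbb{C}^\ast$ with two ends; cut $\widehat S$ along the lifts of an arc joining its two ends to get a polygonal decomposition with one polygon per cycle of $\sigma_3$, and compute $\chi(\widehat S)$ modulo $2$: odd-order branch points contribute evenly while each even-order one contributes oddly, so $\chi(\widehat S)\equiv j\pmod2$, whereas the two-ended orientable structure forces $\chi(\widehat S)\equiv0$, contradicting $j$ odd. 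In either approach the infinitely-many-odd-cycles accounting is the delicate point.
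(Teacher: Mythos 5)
Your central computation is sound, and in the regime where $\sigma_3$ has finite support it gives a correct, self-contained proof. The paper, by contrast, disposes of the theorem in one line by citing Bertram's result \cite{Bertraminf} that an odd permutation of finite support is not a product of two infinite cycles; your transposition bookkeeping (each transposition preserves the property ``exactly one infinite cycle, finitely many finite cycles'' and changes the number of finite cycles by exactly $\pm 1$, so $s^{-1}\sigma_3^{-1}$ acquires an odd, hence positive, number of finite cycles) is in effect a proof of that cited fact. That is a genuine gain in self-containedness over the paper's citation, and the connectedness remark is fine.

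The genuine gap is the case you yourself flag as delicate, where $\rho_O$ has infinitely many cycles: no localization argument can close it, because the statement is \emph{false} in that regime. If $\sigma_3$ has infinite support --- say a single transposition together with infinitely many $3$-cycles, so the third circle still has an odd number (one) of components of even index --- then the paper's own Theorem \ref{thmA} (Theorem A of \cite{Drosteprod}), applied with $S$ the conjugacy class of $\sigma_3$ and $P_1=P_2$ the class of transitive infinite cycles, yields representatives with $\sigma_3 p_1 p_2=e$; that is, $\sigma_3^{-1}$ \emph{is} a product of two transitive infinite cycles and the covering exists. So Theorem \ref{bertthm} must be read with ``covered finitely'' forcing $\sigma_3$ to have finite support, which is exactly the case Bertram's result and your finite count handle. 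Concretely, your limit step fails because the finite cycles of the approximants $s^{-1}\rho_E\rho_O^{[k]}$ contain points that are moved by later cycles of $\rho_O$, so they are not cycles of $\sigma_2$ and need not stabilize; the parity of the finite-cycle count is not continuous under an infinite product of transpositions (each odd cycle changes the count by an even amount, but an infinite sum of even increments carries no parity information, and finite cycles can be absorbed into the infinite cycle in the limit). The Euler-characteristic variant founders on the same point: with infinitely many branch points the cover has infinite topology and $\chi$ modulo $2$ is not defined. I would restrict the claim to $\sigma_3$ of finite support and delete the two proposed repairs.
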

\begin{proof}
This is an immediate corollary of a result of E. Bertram (see \cite{Bertraminf}) which states that an odd permutation of finite support is \emph{not} a product of two infinite cycles.
\end{proof}

\subsection{Regular coverings}
\label{regcover}
D. Futer asked whether the results of the previous section had analogues when the covering given by $\Pi$ was additionally required to be \emph{regular}. This seems to be a hard question in general. For example, in the case where $S$ is a planar surface with $k$ boundary components, we have the following result:
\begin{theorem}
\label{regplanar}
In order for a covering of the boundary of $S$ to extend to a \emph{regular} covering of $S,$ it is necessary and sufficient that, in addition to the requirements of Theorem \ref{sullthm} (part 1), there must be a subgroup $G < S_n,$ with $|G| = n$ and  $G$ is generated by $\gamma_1, \dots, \gamma_k,$ where $\gamma_i \in \Gamma_i,$ for $i=1, \dots, k.$
\end{theorem}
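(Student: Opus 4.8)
The plan is to translate the statement entirely into the language of the monodromy (permutation) representation and then read off both implications, with no new group theory required. Recall the standard dictionary: a degree‑$n$ covering of $S$ is the same datum as a homomorphism $\Sigma\colon\pi_1(S)\to S_n$ up to conjugacy (the action on a fixed fibre); the cover is connected iff $\operatorname{Im}\Sigma$ is transitive; and — the point here — a connected cover is \emph{regular} iff its group of deck transformations has order equal to the degree, equivalently iff $\operatorname{Im}\Sigma$ acts \emph{regularly} (simply transitively) on the $n$ sheets. For a planar $S$ with boundary components $C_1,\dots,C_k$, the group $\pi_1(S)$ is free of rank $k-1$, the boundary loops $\gamma_1,\dots,\gamma_k$ satisfy exactly the one relation $\gamma_1\cdots\gamma_k=e$, and they generate $\pi_1(S)$. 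Hence a homomorphism $\Sigma$ inducing the prescribed boundary cover is precisely a choice of $\sigma_i:=\Sigma(\gamma_i)\in\Gamma_i$ with $\sigma_1\cdots\sigma_k=e$, and then $\operatorname{Im}\Sigma=\langle\sigma_1,\dots,\sigma_k\rangle$; this last remark already packages Theorem~\ref{sullthm}(1) together with what we want.

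For necessity, I would start from a regular extension $\Pi\colon\widetilde S\to S$ of degree $n$. Its deck group $G$ has order $n$, and the monodromy representation $\Sigma\colon\pi_1(S)\to S_n$ attached to $\Pi$ — acting on a fibre, which we identify with $G$ via the left regular representation — has image exactly $G$ and surjects onto $G$. Since $\gamma_1,\dots,\gamma_k$ generate $\pi_1(S)$, the elements $\gamma_i:=\Sigma(\gamma_i)$ generate $G$; they satisfy $\gamma_1\cdots\gamma_k=e$ because the $\gamma_i$ do in $\pi_1(S)$; and each $\gamma_i$ lies in $\Gamma_i$ because $\Pi$ restricts over $C_i$ to the given boundary cover, whose monodromy class is $\Gamma_i$. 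This is exactly the asserted subgroup $G<S_n$, and the relation $\gamma_1\cdots\gamma_k=e$ is the requirement of Theorem~\ref{sullthm}(1).

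For sufficiency, suppose we are given $\gamma_i\in\Gamma_i$ with $\gamma_1\cdots\gamma_k=e$ such that $G:=\langle\gamma_1,\dots,\gamma_k\rangle$ has order $n$ (and acts transitively on $\{1,\dots,n\}$, as is implicit if the resulting cover is to be connected — see the next paragraph). Because $\pi_1(S)$ is free on $\gamma_1,\dots,\gamma_{k-1}$ and the relation $\gamma_1\cdots\gamma_k=e$ determines the value on $\gamma_k$, the rule $\gamma_i\mapsto\gamma_i$ extends uniquely to a homomorphism $\Sigma\colon\pi_1(S)\to S_n$ with image $G$; the associated degree‑$n$ cover restricts over each $C_i$ to a cover whose monodromy class is $\Gamma_i$, i.e.\ to the prescribed boundary cover. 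Finally, a transitive subgroup of $S_n$ of order $n$ has, by orbit–stabiliser, trivial point stabilisers, hence acts regularly; therefore the cover is connected with deck group $G$ of order $n$, and is thus regular.

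The one genuinely delicate point — and the step I would treat most carefully — is the meaning of ``regular'' for a possibly disconnected cover, and correspondingly whether ``$|G|=n$'' must be supplemented by transitivity of $G$. Concretely, an order‑$n$ subgroup of $S_n$ need not be transitive (e.g.\ $\{e,(12),(34),(12)(34)\}<S_4$), and feeding such a $G$ into the construction above produces a disjoint union of connected covers that need not be isomorphic to one another, hence need not be regular. I would resolve this by stating at the outset the convention that a regular covering is in particular connected, so that the monodromy image is transitive; under that convention ``$|G|=n$'' exactly says $G$ is a regular permutation subgroup of $S_n$, and the argument above goes through cleanly. Everything else is routine: it is the standard covering‑space correspondence plus the fact that the $\Gamma_i$ are conjugacy classes, which makes the matching of boundary monodromies automatic, and no Ore‑type input is needed.
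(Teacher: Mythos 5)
The paper states Theorem~\ref{regplanar} without proof (it is treated as an instance of the standard covering-space dictionary), so there is nothing in the text to compare against line by line; your write-up supplies the missing argument and does so correctly. Your route is exactly the expected one: translate everything into the monodromy representation $\Sigma\colon\pi_1(S)\to S_n$, use that $\pi_1(S)$ of a planar surface is free on $k-1$ boundary loops subject to $\gamma_1\cdots\gamma_k=e$, and use that a \emph{connected} degree-$n$ cover is regular precisely when the monodromy image has order $n$ (equivalently, acts simply transitively on the fibre), in which case that image is the deck group. Both implications then read off as you describe. The one substantive contribution of your proof beyond bookkeeping is the observation that the condition ``$\abs{G}=n$'' does not by itself force $G$ to be transitive, and that for a non-transitive $G$ of order $n$ the resulting disconnected cover need not be regular in any reasonable sense (indeed, a disjoint union of two copies of a regular degree-$n/2$ cover has fibre-transitive deck group but monodromy image of order $n/2$, which would break the ``necessity'' direction as well). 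Your resolution --- adopting the convention that regular covers are connected, so that $G$ is transitive and hence a regular permutation group --- is not merely cosmetic: it is needed for the theorem to be literally true as stated, and the paper's statement is silent on it. So your proof is correct and, if anything, slightly more careful than the source.
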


As far as the authors know, there is no particularly efficient way of deciding whether the condition of Theorem \ref{regplanar} is satisfied.

Here is a more satisfactory (if not very positive) result:
\begin{theorem}
\label{regq}
Let $S$ be a surface with one boundary component. There does not exist a nontrivial covering of finite index  $\Pi: \widetilde{S}\rightarrow S$ where $\widetilde{S}$ also has one boundary component.
\end{theorem}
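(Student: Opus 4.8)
The plan is to translate the hypothesis into the group theory of the boundary monodromy, and then to exploit the fact that, for an orientable $S$ with a single boundary circle, the boundary curve is a product of commutators. Write $G=\pi_1(S)$ and let $c\in G$ be the class of the boundary circle $C=\partial S$. A covering $\Pi\colon\widetilde S\to S$ is recorded by its monodromy representation $\rho\colon G\to S_\Omega$ on a fiber $\Omega$, and the components of $\partial\widetilde S=\Pi^{-1}(C)$ are in bijection with the cycles of $\rho(c)$ on $\Omega$. Since every component of a covering of a bordered surface meets the boundary, the assumption that $\widetilde S$ has a \emph{single} boundary component already forces $\widetilde S$ to be connected and $\rho(c)$ to be one $n$-cycle, $n=|\Omega|$. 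Equivalently, writing $H\le G$ for the index-$n$ subgroup of the connected cover, $\langle c\rangle$ acts transitively on $G/H$, so that $G=\langle c\rangle H$. I would set this reduction up first, for an arbitrary finite cover, so that the role of each later hypothesis is visible.

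Next I would feed in the orientability. Because $S$ is orientable with exactly one boundary component, a standard generating set gives $c=\prod_{i=1}^{g}[a_i,b_i]\in[G,G]$ (equivalently $[\partial S]=0$ in $H_1(S;\mathbb{Z})$), so $\langle c\rangle\le[G,G]$ and the relation $G=\langle c\rangle H$ upgrades to $G=[G,G]\,H$. At this point I pass to the deck group $D=G/H$ — legitimate because the covering is regular, which is the setting of the present subsection (Futer's question concerns \emph{regular} extensions, as do the hypotheses of Theorem \ref{regplanar}). The first relation says that $D=\langle\bar c\rangle$ is cyclic, where $\bar c$ is the image of $c$; the second says that $D=[D,D]$ is perfect. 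A cyclic perfect group is trivial, so $D$ is trivial and $n=1$, contradicting nontriviality. Hence no such $\Pi$ exists.

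I would then record the homological reading of the same mechanism, which is perhaps more transparent: the boundary is null-homologous, so it maps to the identity in any abelian quotient of $G$, and therefore in an abelian covering it lifts to $n$ disjoint copies and produces $n\ge 2$ boundary circles rather than one. The commutator-subgroup computation above is exactly the nonabelian refinement of this observation, and it is what lets the argument run uniformly over all regular covers (with possibly nonabelian deck group), not merely the cyclic ones.

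The step I expect to be most delicate is the passage to the quotient $D$ in the second paragraph: the relation $G=[G,G]H$ collapses to ``$D$ is perfect'' only once one may divide by $H$, and this is precisely the place where the deck-transformation structure of the covering is used. It is also where the hypotheses reveal themselves to be sharp — orientability is used solely to place $c$ in $[G,G]$, and it is the \emph{simultaneous} cyclicity and perfectness of $D$ that forces $n=1$. I would therefore double-check the boundary-counting dictionary of the first paragraph (cycles of $\rho(c)$ versus components of $\Pi^{-1}(C)$, and the automatic connectedness of $\widetilde S$), since the entire reduction rests on it, and I would make explicit at which line the normality of $H$ is invoked so that the logical dependence on the regular hypothesis is unambiguous.
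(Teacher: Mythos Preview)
Your proof is correct and follows essentially the same route as the paper: the single boundary component forces the deck group to be cyclic (generated by the image $\bar c$ of the boundary loop), while the fact that $c$ is a product of commutators forces $\bar c$ to be trivial in the abelian deck group, giving the contradiction. Your explicit flagging of where normality of $H$ (i.e.\ regularity) is invoked is a welcome clarification, since the theorem statement omits the word ``regular'' while both your argument and the paper's require it.
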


\begin{proof}
Let the degree of the covering be $n.$ If $\widetilde{S}$ has one boundary component, the generator of the $\pi_1(\partial S)$ gives rise to the cyclic group $\mathbb{Z}/n\mathbb{Z},$ which is a subgroup of the deck group of $\Pi.$. Since the deck group has order $n$ (by regularity), the covering is cyclic (so that the deck group is, in fact, $\mathbb{Z}/n \mathbb{Z}.$). A cyclic group is abelian, and since the generator of $\pi_1(\partial S)$ is a product of commutators, it is killed by the map $\Sigma.$ But this contradicts the statement of the first sentence of this proof (that this same element generates the entire deck group).
\end{proof}

The proof also shows the following:
\begin{theorem}
\label{regq2}
Let $\Pi: \widetilde{S}\rightarrow S,$ where $S$ has a single boundary component, be an \emph{abelian} regular covering of degree $n.$ Then $\widetilde{S}$ has $n$ boundary components.
\end{theorem}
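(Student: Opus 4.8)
The plan is to mimic the structure of the proof of Theorem \ref{regq}, but instead of deriving a contradiction, to carry the bookkeeping through to a count of boundary components. Write $n$ for the degree of the covering $\Pi$, let $\gamma$ be a generator of $\pi_1(\partial S)$, and let $\Sigma\colon\pi_1(S)\to G\le\Sp S_n$ be the permutation representation associated to $\Pi$, where $G$ is the deck group, a transitive subgroup of $S_n$ of order $n$ acting regularly on the fiber (regularity of the cover means $G$ acts simply transitively, so the action is the left-regular representation of $G$). Since $S$ has a single boundary component, $\gamma$ is a product of commutators in $\pi_1(S)$ (for orientable $S$ of genus $g$ it is a product of $g$ commutators; the argument only needs $\gamma\in[\pi_1(S),\pi_1(S)]$), hence $\Sigma(\gamma)$ lies in the commutator subgroup $[G,G]$.

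First I would record the elementary fact that the number of boundary components of $\widetilde S$ equals the number of cycles of $\Sigma(\gamma)$ acting on the fiber: a boundary component of $S$ lifts to a disjoint union of circles, one for each cycle of the monodromy permutation, since walking around $\partial S$ permutes the $n$ sheets by $\Sigma(\gamma)$ and each orbit closes up into a single circle (whose length is the cycle length). So the entire problem reduces to computing the cycle type of $\Sigma(\gamma)$ under the regular action of $G$ on itself. Second, I would invoke the abelianness hypothesis: $G$ abelian forces $[G,G]=\{e\}$, so $\Sigma(\gamma)=e$, the identity permutation. Finally, the identity permutation on an $n$-element fiber has exactly $n$ cycles (all of them fixed points, i.e. $n$ trivial cycles of length one), so $\widetilde S$ has $n$ boundary components, each mapping homeomorphically (degree one) onto $\partial S$.

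I would also remark that this is consistent with Theorem \ref{regq}: the case $n>1$ with one boundary component upstairs is excluded precisely because $n\ne 1$. The only genuinely non-routine point is the identification ``number of boundary components of $\widetilde S$ $=$ number of cycles of the boundary monodromy,'' which is the standard dictionary between covers and permutation representations (cf.\ \cite{hatcheralgtop}) applied to the $1$-manifold $\partial S\simeq S^1$; once that is in hand, everything else is immediate from $[G,G]=\{e\}$. There is essentially no obstacle here — the theorem is squeezing an extra conclusion out of the same computation that proved Theorem \ref{regq}, and the work is just making the cycle-counting explicit rather than stopping at the contradiction.
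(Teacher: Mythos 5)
Your proposal is correct and is exactly the argument the paper intends: Theorem \ref{regq2} is stated as a byproduct of the proof of Theorem \ref{regq}, namely that for an abelian deck group the boundary monodromy $\Sigma(\gamma)$ lies in the (trivial) commutator subgroup, hence is the identity, and the $n$ fixed points give $n$ boundary circles upstairs. Your only addition is making explicit the standard dictionary ``boundary components of $\widetilde S$ $=$ cycles of $\Sigma(\gamma)$,'' which the paper leaves implicit (and, like the paper, you tacitly restrict to orientable $S$ so that $\gamma$ is a product of commutators).
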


We can combine our results in the following omnibus theorem:
\begin{theorem}
\label{regq3}
Let $S$ be a surface, whose boundary has $k$ connected components. Let the conjugacy classes of the $k$ coverings be $\Gamma_1, \dotsc, \Gamma_k.$  In order for a constant cardinality covering of $\partial S$ to extend to a regular covering of $S$ it is necessary and sufficient that there be elements $\gamma_1\in \Gamma_1, \dotsc, \gamma_k \in \Gamma_k$ such that $\gamma_1, \dotsc, \gamma_k$ generate an order $n$ subgroup of $S_n$ and $\gamma_1\dots \gamma_k = e.$
\end{theorem}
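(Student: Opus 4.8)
The plan is to deduce the statement from the standard dictionary between coverings and permutation representations, together with the special cases already in hand (Theorems~\ref{sullthm}, \ref{regplanar}, \ref{regq}, \ref{regq2}). Recall that a degree-$n$ covering of $S$ is the same datum as a homomorphism $\Sigma\colon\pi_1(S)\to S_n$ taken up to conjugacy; writing $\pi_1(S)=\langle a_1,b_1,\dots,a_g,b_g,c_1,\dots,c_k\mid \prod_i[a_i,b_i]\prod_j c_j\rangle$ with the $c_j$ the boundary loops, the boundary covering is exactly the tuple of classes $\Gamma_j\ni\Sigma(c_j)$. The total space is connected iff $\Sigma(\pi_1(S))$ is transitive, and a connected covering is regular iff its monodromy group $\Sigma(\pi_1(S))$ has order exactly $n$ (equivalently, acts freely, since a transitive subgroup of $S_n$ of order $n$ automatically has trivial point stabilizers), in which case the monodromy group coincides with the deck group. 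A disconnected regular covering is a disjoint union of copies of a single connected regular covering of $S$, so it is enough to treat the connected case. Thus the target reduces to: a regular extension of the given boundary covering exists iff one can choose $\gamma_j\in\Gamma_j$ (and, when $g>0$, auxiliary $\alpha_i,\beta_i\in S_n$) generating a transitive order-$n$ subgroup $G<S_n$ with $\prod_i[\alpha_i,\beta_i]\prod_j\gamma_j=e$.

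For sufficiency, given such elements, define $\Sigma\colon\pi_1(S)\to S_n$ by $a_i\mapsto\alpha_i$, $b_i\mapsto\beta_i$, $c_j\mapsto\gamma_j$. The single relator is killed by hypothesis, so $\Sigma$ is a well-defined homomorphism; its image is $G$, transitive of order $n$, so the associated covering is connected and regular, and its boundary monodromies lie in $\Gamma_1,\dots,\Gamma_k$ by construction. In the planar case $g=0$ there are no $\alpha_i,\beta_i$ and this is precisely Theorem~\ref{regplanar} combined with Theorem~\ref{sullthm}(1).

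For necessity, start from a regular covering $\Pi\colon\widetilde S\to S$ with boundary classes $\Gamma_j$; after reducing to the connected case its monodromy $\Sigma$ has transitive image $G=\Sigma(\pi_1(S))$ with $|G|=n$, and $\gamma_j=\Sigma(c_j)$, $\alpha_i=\Sigma(a_i)$, $\beta_i=\Sigma(b_i)$ are elements in the prescribed classes which generate $G$ and satisfy the relator equation. When $k=1$ this recovers Theorems~\ref{regq} and~\ref{regq2}: the unique boundary loop equals $\prod_i[a_i,b_i]$, so its monodromy lies in the commutator subgroup of $G$, which bounds the number of boundary components upstairs and, in the abelian case, forces it to be trivial so that $\widetilde S$ has exactly $n$ boundary components. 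The remaining work is to package these ingredients — the genus-zero count of Theorem~\ref{regplanar}, the one-boundary rigidity of Theorems~\ref{regq},~\ref{regq2}, and the connected-sum decomposition used in the proof of Theorem~\ref{sullthm} — into the single uniform statement.

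The step I expect to be the real obstacle is the bookkeeping concealed in the phrase ``generate an order-$n$ subgroup'' once $g>0$: one must ensure that the handle generators needed to realize $\prod_i[\alpha_i,\beta_i]=(\prod_j\gamma_j)^{-1}$ can be kept inside a subgroup of order \emph{exactly} $n$ rather than merely inside some larger transitive group, and, in the construction direction, that adjoining them does not inflate the monodromy group past order $n$. Controlling this interaction between the genus and the order-$n$ constraint — in particular deciding for which cycle types $\Gamma_j$ the commutator contribution of the handles can be absorbed without changing the prescribed boundary classes — is the delicate point, and it is why the omnibus theorem is phrased as the existence of one order-$n$ group simultaneously carrying all of the data rather than as separate conditions on the individual $\Gamma_j$.
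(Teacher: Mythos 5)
The paper states Theorem~\ref{regq3} as an ``omnibus'' combination of the preceding results and offers no proof of its own, so there is nothing to compare your route against; judged on its own terms, your write-up contains a genuine gap that you yourself flag in the last paragraph but do not close. Your ``reduced target'' --- existence of $\gamma_j\in\Gamma_j$ \emph{and auxiliary handle images} $\alpha_i,\beta_i$ generating a transitive order-$n$ subgroup with $\prod_i[\alpha_i,\beta_i]\prod_j\gamma_j=e$ --- is the correct criterion for a connected regular extension, and your sufficiency and necessity arguments for \emph{that} statement are fine. But it is not the statement of the theorem, which asks that the boundary monodromies $\gamma_1,\dotsc,\gamma_k$ \emph{alone} generate the order-$n$ subgroup and satisfy $\gamma_1\cdots\gamma_k=e$, with no commutator contribution. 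The passage from your criterion to the literal one is exactly the ``bookkeeping'' you defer, and it cannot be carried out in the necessity direction when $g>0$: take $S$ of genus $1$ with one boundary component, $n=2$, and $\Sigma(a_1)=(12)$, $\Sigma(b_1)=e$. This is a connected regular double cover whose boundary monodromy is $\Gamma_1=\{e\}$, yet $\gamma_1=e$ generates the trivial group, not an order-$2$ subgroup of $S_2$. (This example is consistent with Theorem~\ref{regq2}.) So either the theorem is to be read with the handle images adjoined --- i.e.\ your corrected version --- or it is intended only for planar $S$, in which case it adds nothing to Theorem~\ref{regplanar}; in either case a proof of the statement as literally written cannot be completed, and your proposal does not resolve which reading is meant.

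Two smaller points. First, the reduction ``a disconnected regular covering is a disjoint union of copies of a connected one, so it suffices to treat the connected case'' is not free: the components then have degree $n/m$ for $m$ components, the classes $\Gamma_j$ must be reinterpreted inside $S_{n/m}$, and the ``order $n$'' in the conclusion changes; since the theorem fixes $n$ throughout, you need to say explicitly that the extension is taken to be connected or explain how the disconnected case maps onto the stated condition. Second, ``order $n$ subgroup of $S_n$'' does not by itself imply transitivity (e.g.\ $S_3<S_6$ acting with two orbits of size~$3$), so in the sufficiency direction you should either add transitivity as a hypothesis, as Theorem~\ref{regplanar} implicitly does by identifying $G$ with the deck group, or accept that the resulting regular covering may be disconnected.
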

Theorem \ref{regq2} can be extended to infinite degree coverings, in the spirit of Theorem \ref{sullthminf}, as follows:

\begin{theorem}
\label{regq4}
Let $\Pi:\widetilde{S}\rightarrow S,$ where $S$ has a single boundary component, be an abelian regular covering of infinite degree. Then, every connected component of $\Pi^{-1}(\partial S)$ covers $\partial S$ exactly once.
\end{theorem}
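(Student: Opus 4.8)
The plan is to repeat, essentially verbatim, the argument behind Theorems \ref{regq} and \ref{regq2}, observing that the property of the deck group actually used there is commutativity, not finiteness.

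First I would set up the covering-space dictionary in the infinite-degree setting. Taking $\widetilde S$ connected (as is tacit in speaking of a \emph{regular} covering), normality of $\pi_1(\widetilde S)$ in $\pi_1(S)$ makes the deck group $A := \pi_1(S)/\pi_1(\widetilde S)$ act simply transitively on every fiber of $\Pi$, and the monodromy homomorphism $\Sigma\colon \pi_1(S)\to A$ is just the quotient map; none of this needs $[\pi_1(S):\pi_1(\widetilde S)]$ to be finite. By hypothesis $A$ is abelian, so $\Sigma$ annihilates the commutator subgroup $[\pi_1(S),\pi_1(S)]$.

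Next I would locate the boundary. Let $c$ generate $\pi_1(\partial S)\cong\integers$, included into $\pi_1(S)$. Since $S$ has a single boundary component, $c$ is — as in the proof of Theorem \ref{sullthm} — a product of $g$ commutators in $\pi_1(S)$, where $g$ is the genus; hence $c\in[\pi_1(S),\pi_1(S)]$ and $\Sigma(c)=e$. Finally I would read off the conclusion: the restriction $\Pi^{-1}(\partial S)\to\partial S$ is a covering of a circle whose monodromy on $\pi_1(\partial S)=\langle c\rangle$ is $\Sigma|_{\langle c\rangle}$, hence trivial; the components of $\Pi^{-1}(\partial S)$ correspond to the $\langle c\rangle$-orbits on a fiber, which are therefore all singletons, so each component is a connected covering of $S^1$ with trivial monodromy, i.e.\ a homeomorphism onto $\partial S$. (In particular $\Pi^{-1}(\partial S)$ has $|A|$ components, the exact analogue of Theorem \ref{regq2}.)

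The one point that takes a moment's care — and the closest thing here to a genuine obstacle — is making precise that the ``deck group acts simply transitively on fibers, monodromy $=$ quotient map'' picture survives when the covering has infinite degree (and, if one insists on allowing it, disconnected total space). This is routine, but worth stating explicitly so that the words \emph{regular} and \emph{abelian} are unambiguous in the infinite regime; everything past that point is formal.
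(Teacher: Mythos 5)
Your argument is correct and is exactly the one the paper intends: Theorem \ref{regq4} is stated as a direct extension of Theorems \ref{regq} and \ref{regq2}, whose proof rests on the same observation you make — the boundary generator is a product of commutators, hence is killed by the monodromy map onto the abelian deck group, so each component of $\Pi^{-1}(\partial S)$ has trivial monodromy and covers $\partial S$ once. Your explicit remark that finiteness of the degree plays no role is precisely the point the paper leaves implicit.
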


\section{A transitive Ore theorem}
\label{transitiveore}
In his short but important paper \cite{orecomm}, Oystein Ore showed two closely related results, both of which have spawned active areas of research. The first was that every element of the alternating group $A_n < S_n $ is a commutator in $S_n.$ (Ore stated, but did not give a proof that the result can be sharpened to give every even permutation as a commutator of even permutations). This result was reproved by a number of people, but also gave rise to Ore's conjecture that every element of a finite (non-abelian) simple group is a commutator and the stronger Thompson's conjecture to the effect that every finite non-abelian simple group is the square of a conjugacy class. 

Ore's second result (and one whose proof occupies ninety percent of his paper) is that every element $\sigma$ of the group $S_\infty$ of permutations of a countable set $\Omega$ is a commutator $[g, h]$. Ore's proof is quite complicated, involving a case-by-case analysis of the possible cycle decompositions of $\sigma;$ different cycle decompositions give quite different sorts of $g$ and $h.$ While Ore's result has been sharpened and generalized by a number of people (including one of the authors of this paper). A relatively simple proof has recently been given by George Bergman in the celebrated paper \cite{bergmanprop}. Here we prove the following strengthening:

\begin{theorem}
\label{orethm}
Every element $\sigma \in S_\infty$ can be given as a commutator $[g, h],$ where $g$ and $h$ generate a subgroup $K < S_\infty$ acting transitively on $\Omega.$
\end{theorem}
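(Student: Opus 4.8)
The plan is to reduce Ore's theorem to the transitive setting by a gluing construction. Ore proves that an arbitrary $\sigma\in S_\infty$ is a commutator $[g,h]$, but the subgroup $\langle g,h\rangle$ need not be transitive; it partitions $\Omega$ into orbits $\Omega = \bigsqcup_{j} \Omega_j$, each invariant under both $g$ and $h$, with $\sigma|_{\Omega_j}=[g|_{\Omega_j},h|_{\Omega_j}]$. So the real content is: given a countable family of commutator decompositions, one on each block, splice them together into a \emph{single} commutator decomposition on the disjoint union whose two factors generate a transitive group. Equivalently, I want a ``connecting'' move that merges two orbits into one while preserving the property of being a commutator decomposition of the prescribed permutation.

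First I would set up the following lemma, which is the heart of the matter. Suppose $\Omega = A \sqcup B$ with $\sigma = \sigma_A \sqcup \sigma_B$, and we have $\sigma_A = [g_A,h_A]$ with $\langle g_A,h_A\rangle$ transitive on $A$, and similarly on $B$. Then $\sigma = [g,h]$ on $\Omega$ with $\langle g,h\rangle$ transitive. The model case to get right is when both $A$ and $B$ are single fixed points of $\sigma$ (so $\sigma_A,\sigma_B$ are identities): here one wants the identity on a two-point set written as a commutator of two permutations generating a transitive (hence, order-2 or the full $S_2$... so order $2$) group — e.g. $g$ the transposition, $h$ the identity gives $[g,h]=e$ and $\langle g,h\rangle$ transitive. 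More generally for the identity on an infinite fixed-point set one takes $g$ an infinite cycle through all those points and $h$ trivial. The genuinely delicate case is merging a nontrivial block with another: the trick is to introduce an auxiliary point or to absorb a ``link'' into $g$ without changing the commutator. Concretely, if I conjugate $h_B$ by a transposition $(a\,b)$ with $a\in A$, $b\in B$ — no, that changes things on $A$. The cleaner device: replace $g$ by $g\cdot c$ where $c$ is a permutation \emph{supported on points that $h$ fixes} and such that $h$-conjugation... one needs $g c h (gc)^{-1} h^{-1} = ghg^{-1}h^{-1}$, i.e. $c$ commutes appropriately. The standard way to arrange this (and the way I expect the paper does it) is to build in from the start, on each block, a decomposition where $h$ fixes a chosen ``port'' point, then string the ports together with an extra cycle folded into $g$; since the port is $h$-fixed, multiplying $g$ by a cycle through the ports on the side where it's adjacent to $h$-fixed material leaves the commutator unchanged, while the new cycle in $g$ connects all the blocks, forcing transitivity.

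The key steps in order: (1) Prove the two-point / fixed-set base case and record the normal form ``$\sigma|_{\text{block}}=[g,h]$ with $h$ fixing a prescribed point of the block''. (2) Revisit Ore's case analysis (or Bergman's proof) to check that each of the finitely many shapes of a single cycle — finite $n$-cycle, infinite cycle, infinite product of bounded cycles, etc. — admits a decomposition in this normal form; this amounts to verifying that in Ore's explicit constructions one can always cyclically shift so that $h$ has at least one fixed point in the cycle's support, or else add a harmless fixed point and absorb it. (3) State and prove the merging lemma: with normal forms on $A$ and $B$, set $h = h_A\sqcup h_B$ and $g = (g_A\sqcup g_B)\cdot \tau$ where $\tau$ is the transposition (or short cycle) swapping the port of $A$ with the port of $B$; check $[g,h]=[g_A\sqcup g_B,h]=\sigma$ because $\tau$ is supported on $h$-fixed points and one verifies $g h g^{-1}=(g_A\sqcup g_B)h(g_A\sqcup g_B)^{-1}$ directly, and check that $\langle g,h\rangle$ is transitive since it maps $A$-orbit and $B$-orbit onto each other via $\tau$. (4) Iterate over the countably many orbits $\Omega_j$ of Ore's original $\langle g,h\rangle$: replace the finitely/countably many ports by a single infinite cycle $\tau$ running through one port from each block, folded into $g$; conclude transitivity of $\langle g,h\rangle$ on all of $\Omega$.

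The main obstacle will be step (2): making sure the normal form (some $h$-fixed point available in each cycle's support, without disturbing the commutator identity) really can be extracted from \emph{every} case of Ore's decomposition, including the awkward infinite cases where both $g$ and $h$ are themselves infinite-support permutations with no obvious fixed points. If in some case no port is available, the fallback is to enlarge that block by one artificial point $p$, extend $\sigma$ to fix $p$, extend $g$ by a $2$-cycle pairing $p$ with an existing point and $h$ to fix $p$ — but one must then re-verify the commutator, which is the calculation I would treat most carefully. I expect the paper instead works directly cycle-type by cycle-type (Sections \ref{infinite} and \ref{finitecycle} as advertised), giving a transitive commutator presentation for each single-cycle $\sigma$ first and then assembling, which sidesteps reliance on Ore's non-transitive decomposition altogether; that is in fact the cleaner route and the one I would ultimately follow.
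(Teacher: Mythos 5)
Your plan is a genuinely different route from the paper's, and as written it has a gap at its central step, the merging lemma. The half of that lemma concerning the commutator is fine: a permutation $\tau$ supported on the fixed-point set of $h$ commutes with $h$, so $[(g_A\sqcup g_B)\tau,\,h]=[g_A\sqcup g_B,\,h]$. The half concerning transitivity is not: the group $\langle (g_A\sqcup g_B)\tau,\,h\rangle$ does not contain $\tau$, so ``it maps the $A$-orbit and $B$-orbit onto each other via $\tau$'' is not an argument. In fact the conclusion can fail. Take $A=B=\mathbb{Z}$, $g_A=g_B$ the shift $n\mapsto n+1$, $h_A=h_B=e$ (so $\sigma=e$ on each block and each block is transitive under $\langle g,h\rangle$), and ports $0_A,0_B$. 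Then $g=(g_A\sqcup g_B)(0_A\,0_B)$ has exactly two orbits, namely $\dots,-1_A,0_A,1_B,2_B,\dots$ and $\dots,-1_B,0_B,1_A,2_A,\dots$, and $h=e$, so the spliced pair is intransitive while still satisfying $[g,h]=\sigma$. The structural reason is that splicing replaces the two edges $\{p_1,g(p_1)\}$, $\{p_2,g(p_2)\}$ of the orbit graph by $\{p_1,g(p_2)\}$, $\{p_2,g(p_1)\}$; if both deleted edges are bridges, the union disconnects into two components. So the lemma needs an extra hypothesis on the ports (e.g.\ that the $g$-edge at a port is not a bridge, or that a port is fixed by $g$ as well), and your step (2) --- extracting ports, or even just $h$-fixed points in every $\langle g,h\rangle$-orbit, from each case of Ore's decomposition --- is a second unverified load-bearing step, as you yourself note; the ``add an artificial point'' fallback changes $\Omega$ and does not obviously close it.

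The paper sidesteps all of this by never invoking Ore's non-transitive decomposition. It identifies $\Omega$ with $\mathbb{Z}^2$ in a way adapted to the cycle structure of $\sigma$: if $\sigma$ has infinitely many cycles it is arranged to preserve horizontal lines (Section \ref{infinite}), and if finitely many it is arranged to move points vertically by at most one (Section \ref{finitecycle}). One then takes a single global $h=\tau$, the vertical shift by $1$ (resp.\ $2$), observes that $\sigma\tau$ and $\tau$ both consist of infinitely many infinite cycles and nothing else, hence are conjugate by Lemma \ref{conjugacy1}, so $\sigma\tau=a\tau a^{-1}$ and $\sigma=[a,\tau]$ by Lemma \ref{commutator}; transitivity of $\langle a,\tau\rangle$ is then read off directly from the lattice geometry (one is free to choose $a$ to act as a horizontal shift on a line). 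In other words, the transitive witness is chosen globally at the outset rather than retrofitted orbit by orbit, which is exactly the difficulty your assembly step runs into. If you want to salvage your approach, you would need to prove the strengthened merging lemma with a non-bridge (or doubly fixed) port condition and then verify that condition in each of Ore's cases; the paper's construction is shorter.
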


The property of transitivity is important for our application (extending covering maps of surfaces); nevertheless the proof of Theorem \ref{orethm} is simpler than Ore's original proof, although it is based on the same basic idea: to represent the set $\Omega$ (which one would naturally tend to identify with $\mathbb{Z}$) with $\mathbb{Z}^2.$ This idea has also been exploited to considerable advantage in \cite{Drosteuniv}.

The proof involves a case analysis, but this time there are only two cases, dealt with in the two sections below. First, a bit of notation. Given a point $p=(i, j) \in \mathbb{Z}^2,$ we set $x(p) = i,\ y(p) = j.$ An \emph{infinite cycle} of a permutation $\sigma$ is a bi-infinite sequence 
$(\dots, a_{-n}, \dots a_0, \dots a_n, \dots), $ such that $\sigma(a_i) = a_{i+1}.$ We let $\overline{\sigma}$ be the function associating to each $n \in \mathbb{N} \cup \infty$ the number of cycles (or orbits) of size $n$ of $\sigma.$ 
We will need the following simple well-known lemma.
\begin{lemma}
\label{conjugacy1}
Let $\sigma, \tau \in S_\infty..$ Then $\sigma$ is conjugate to $\tau$ if and only if $\overline{\sigma}=\overline{\tau}.$ \end{lemma}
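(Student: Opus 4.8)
The statement to prove is Lemma~\ref{conjugacy1}: two permutations in $S_\infty$ are conjugate if and only if they have the same cycle type function $\overline{\sigma}$. This is the standard classification of conjugacy classes in a symmetric group, and the plan is the familiar one, adapted to the infinite setting.

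\medskip

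The plan is to prove the two directions separately. For the easy direction, suppose $\tau = \rho\sigma\rho^{-1}$ for some $\rho \in S_\infty$. Then I would observe that $\rho$ carries orbits of $\sigma$ to orbits of $\tau$: if $(\dots, a_{-1}, a_0, a_1, \dots)$ is a cycle of $\sigma$, so that $\sigma(a_i) = a_{i+1}$, then $\tau(\rho(a_i)) = \rho\sigma(a_i) = \rho(a_{i+1})$, so $(\dots, \rho(a_{-1}), \rho(a_0), \rho(a_1), \dots)$ is a cycle of $\tau$ of the same cardinality; the same bookkeeping works for finite cycles and for fixed points. Since $\rho$ is a bijection, this orbit-to-orbit map is a bijection between the orbits of $\sigma$ and those of $\tau$ preserving cardinality, which is exactly the assertion $\overline{\sigma} = \overline{\tau}$.

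\medskip

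For the converse, suppose $\overline{\sigma} = \overline{\tau}$. For each $n \in \mathbb{N} \cup \{\infty\}$, the set of $n$-orbits of $\sigma$ and the set of $n$-orbits of $\tau$ have the same cardinality $\overline{\sigma}(n) = \overline{\tau}(n)$, so I would fix a bijection between these two sets of orbits, and then within each matched pair of orbits fix a bijection that intertwines the two cyclic actions: for a finite $n$-orbit $\{a_0, \dots, a_{n-1}\}$ of $\sigma$ (with $\sigma(a_i) = a_{i+1 \bmod n}$) matched to an $n$-orbit $\{b_0, \dots, b_{n-1}\}$ of $\tau$, send $a_i \mapsto b_i$; for an infinite orbit $(\dots, a_{-1}, a_0, a_1, \dots)$ matched to $(\dots, b_{-1}, b_0, b_1, \dots)$, send $a_i \mapsto b_i$. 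Taking the union of all these partial bijections over all orbits gives a single bijection $\rho : \Omega \to \Omega$, i.e.\ an element of $S_\infty$, and by construction $\rho\sigma(a) = \tau\rho(a)$ for every $a$, so $\tau = \rho\sigma\rho^{-1}$.

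\medskip

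There is essentially no hard part here — the result is elementary and well known, which is why the statement calls it "simple well-known." The only point requiring a modicum of care is the choice of the base point within each infinite orbit (there is no canonical "$a_0$"), but this is harmless: any choice works, since shifting the indexing of a bi-infinite sequence does not change the property $\sigma(a_i) = a_{i+1}$. One should also note that $\Omega$ being countable means each $\overline{\sigma}(n)$ is an element of $\mathbb{N} \cup \{\infty\}$ and the total count of orbits is countable, so the union defining $\rho$ is a legitimate disjoint union of bijections covering all of $\Omega$; no choice principle beyond the countable setting is needed. I would keep the write-up to a few lines, as befits a lemma of this kind.
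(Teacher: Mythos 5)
Your proof is correct and is the standard argument; the paper itself gives no proof of this lemma, simply labeling it ``simple well-known,'' and your two-direction argument (conjugation carries orbits to orbits of the same cardinality; conversely, matching orbits of equal size and intertwining the cyclic actions builds the conjugator) is exactly the expected one.
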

and 
\begin{lemma}
\label{commutator}
Let $\sigma \tau$ be conjugate to $\tau$. Then $\sigma$ is a commutator $[\tau, a],$ where 
$\sigma \tau = a \tau a^{-1}.$
\end{lemma}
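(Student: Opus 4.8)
The plan is to unwind the definition of conjugacy directly; this lemma requires no more than a single rearrangement. By hypothesis $\sigma\tau$ lies in the same $S_\infty$-conjugacy class as $\tau$, so by the very definition of ``conjugate'' there exists an element $a \in S_\infty$ with $\sigma\tau = a\tau a^{-1}$ — and this is precisely the element $a$ named in the statement. Multiplying this identity on the right by $\tau^{-1}$ yields $\sigma = a\tau a^{-1}\tau^{-1}$, which exhibits $\sigma$ as a commutator of $a$ and $\tau$, i.e. $\sigma = [\tau,a]$ in the commutator convention used in the paper (the same identity $g = hkh^{-1}k^{-1}$ appearing in Theorem \ref{commutatorthm}, up to the harmless choice of ordering, since $[x,y]^{-1} = [y,x]$). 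That is the whole proof.

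Concretely, the steps I would carry out are: (i) invoke the definition of conjugacy to produce $a$ with $\sigma\tau = a\tau a^{-1}$; (ii) solve for $\sigma$ by right-multiplication by $\tau^{-1}$ to get $\sigma = a\tau a^{-1}\tau^{-1}$; (iii) recognize the right-hand side as the commutator $[\tau,a]$, recording explicitly which convention is in force so the statement matches. If one prefers, step (iii) can be skipped entirely by simply displaying $\sigma = a\tau a^{-1}\tau^{-1}$ and observing this is manifestly a commutator.

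There is no genuine obstacle here. The only point demanding any care is notational: one must fix and state the convention for $[\cdot,\cdot]$ consistently with Theorem \ref{orethm} and Theorem \ref{commutatorthm}, so that ``$\sigma$ is the commutator $[\tau,a]$'' is literally what the displayed equation says rather than its inverse. The substance of the lemma is that checking $\sigma$ is a commutator with $\tau$ as one of the two factors reduces to checking the single conjugacy condition $\sigma\tau \sim \tau$, which by Lemma \ref{conjugacy1} is the purely combinatorial condition $\overline{\sigma\tau} = \overline{\tau}$ on cycle types; this is the form in which the lemma will be applied in the sequel, so I would close by noting that reformulation.
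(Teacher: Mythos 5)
Your proof is correct and is exactly the one-line unwinding the paper intends (the paper in fact states this lemma without proof, treating it as immediate): from $\sigma\tau = a\tau a^{-1}$ one gets $\sigma = a\tau a^{-1}\tau^{-1}$, a commutator of $a$ and $\tau$. Your remark about fixing the ordering convention for $[\tau,a]$ is the only point of substance, and you handle it appropriately.
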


\section{Cycle decomposition of $\sigma$ is \emph{infinite}}
\label{infinite}
Below, we shall consider a fixed point of $\sigma$ a cycle (of length one). We need the following basic result:

\begin{lemma}
\label{horizontal}
If $\sigma$ has an infinity of cycles, we can label $\Omega = \mathbb{Z}^2$ in such a way that 
\[
\sigma(i, j) \in \mathbb{Z}\times \{j\},
\]
for all $(i, j) \in \mathbb{Z}^2.$
In other words, horizontal lines in the integer lattice are left invariant by $\sigma.$
\end{lemma}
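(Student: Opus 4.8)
The statement asserts that if $\sigma$ has infinitely many cycles (finite or infinite, fixed points counted as cycles of length one), then we may identify $\Omega$ with $\mathbb{Z}^2$ so that $\sigma$ preserves every horizontal line $\mathbb{Z} \times \{j\}$. The plan is to chop the cycle decomposition of $\sigma$ into a $\mathbb{Z}$-indexed family of blocks, each of which is a countable union of $\sigma$-orbits, and then place block number $j$ on the horizontal line at height $j$, identifying that line (which is countable) with $\mathbb{Z}$.

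First I would record that $\sigma$ has at most countably many orbits, and that each orbit is either finite or a copy of $\mathbb{Z}$; in all cases an orbit is countable. Since $\sigma$ has infinitely many orbits, I can list them (with multiplicity) as $O_1, O_2, O_3, \dots$, possibly a finite-looking enumeration if there were only finitely many — but the hypothesis is precisely that there are infinitely many, so the index set is all of $\mathbb{N}$. Next I would partition $\mathbb{N}$ into countably many infinite (or at least nonempty) subsets $N_j$, one for each $j \in \mathbb{Z}$ — for instance fix any bijection $\mathbb{N} \to \mathbb{Z}$ and let $N_j$ be the preimage of $j$; each $N_j$ is then a nonempty (indeed infinite) set of indices. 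Set $B_j = \bigcup_{i \in N_j} O_i$. Each $B_j$ is a nonempty, countable, $\sigma$-invariant subset of $\Omega$, the $B_j$ are pairwise disjoint, and their union is $\Omega$.

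Then I would choose, for each $j$, a bijection $\phi_j : B_j \to \mathbb{Z} \times \{j\}$ (possible since $B_j$ is countably infinite — here I should note $B_j$ is infinite because, e.g., it contains infinitely many distinct orbits, or because any single infinite orbit already has cardinality $\aleph_0$; if some $B_j$ were merely a finite union of finite orbits I would just be slightly more careful and throw infinitely many orbits into each $N_j$, which the partition above already arranges provided $\sigma$ has infinitely many orbits, but to be safe one picks the $N_j$ all infinite). Assembling the $\phi_j$ gives a single bijection $\phi : \Omega \to \mathbb{Z}^2$. Transporting $\sigma$ through $\phi$, the resulting permutation $\widetilde{\sigma} = \phi \sigma \phi^{-1}$ sends $\mathbb{Z} \times \{j\} = \phi(B_j)$ into itself, because $\sigma(B_j) = B_j$. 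Relabeling $\Omega$ by $\phi$ is exactly the assertion of the lemma.

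The only real subtlety — and the one place to be careful rather than cavalier — is the bookkeeping guaranteeing that each horizontal line receives a countably \emph{infinite} chunk so that the bijections $\phi_j$ exist; this is where the hypothesis that $\sigma$ has \emph{infinitely many} cycles is used, and it is the reason the lemma can fail when $\sigma$ has only finitely many cycles (then one cannot spread invariant pieces across infinitely many lines). Everything else is the routine observation that a $\sigma$-invariant subset, once identified with a horizontal line, forces $\widetilde\sigma$ to preserve that line.
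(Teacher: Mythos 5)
Your argument is correct, and it reaches the conclusion by a cleaner, more uniform route than the paper. The paper's proof is an explicit four-way case analysis on whether the set $C$ of infinite cycles and the set $D$ of finite cycles are finite or infinite (e.g.\ placing each infinite cycle on its own line, stringing infinitely many finite cycles along a line as a bi-infinite sequence, and in the case ``$C$ infinite, $D$ finite'' building a special shifted cycle that skips over a segment of the $x$-axis occupied by the finite cycles). You instead observe that the only thing that matters is that each horizontal line be a countably infinite union of orbits: partition the countably infinite set of orbits into $\mathbb{Z}$-many classes, each containing infinitely many orbits, so that every block $B_j$ is automatically countably infinite and $\sigma$-invariant, and then choose arbitrary bijections $B_j \to \mathbb{Z}\times\{j\}$. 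This collapses the case analysis entirely and isolates exactly where the hypothesis ``infinitely many cycles'' is used. What the paper's explicitness buys is the extra control exploited later (e.g.\ in Remark \ref{transremark}, where an infinite cycle of $\sigma$ is deliberately placed on the $x$-axis, and in the claim that the resulting covers are constructed ``explicitly''); your version has the same freedom, since the $\phi_j$ are arbitrary, but you would need to say so when that control is invoked. One small slip: the preimages of points under a \emph{bijection} $\mathbb{N}\to\mathbb{Z}$ are singletons, so that particular recipe for the $N_j$ does not produce infinite index sets and could leave some $B_j$ equal to a single finite orbit; you catch this yourself at the end (``one picks the $N_j$ all infinite''), and with that correction --- e.g.\ pulling back the fibers of the projection $\mathbb{Z}\times\mathbb{N}\to\mathbb{Z}$ under a bijection $\mathbb{N}\to\mathbb{Z}\times\mathbb{N}$ --- the proof is complete.
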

\begin{proof}
First, consider the set $C$ of \emph{infinite} cycles of $\sigma.$ The set $C$ is either  finite, or infinite. If $C$ is finite,  let $C = \{c_1, \dotsc, c_k\},$ and then let $i(c_k) = k.$ For any cycle  $c \in C,$ represent $c$ by
\begin{equation}
\label{infcycleeq}
c(i, j) = \begin{cases}
(i+1, j), \qquad j=i(c),\\
(i, j), \qquad \mbox{otherwise},
\end{cases}
\end{equation}

Now, let $D$ be the set of finite cycles of $\sigma.$ Since $C$ is finite, $D$ is infinite. Establish a bijection $f$ between $D$ and $\mathbb{Z}\times (\mathbb{Z}\backslash \{1, \dotsc, k\}.$ Let
\[
D_i = \{ d\in D | y(f(d)) = i.\}
\]
The elements of each of the $D_i$ can obviously be arranged in a bi-infinite sequence on the line $y=i.$  which, together with the above arrangement of the cycles in $C$ gives us the desired result.  If $C$ is infinite and $D$ is empty, simply let $C=(\dotsc, c_{-k}, \dotsc, c_0, \dotsc, c_k, \dotsc),$ and let $i(c_l) = l,$ as before. If $C$ and $D$ are both infinite, arrange the infinite cycles on the lines with even $i$ and the finite cycles on the lines with odd $i,$ and finally if $C$ is infinite and $D$ is finite, arrange all but one cycles of $C$ but treat $c_0$ specially: namely, put all the cycles in $D$ in sequence on a segment (say, $0< x  \leq N)$ of the $x$-axis, and make $c_0$ act on the $x$-axis by a shift which skips that segment (that is, $c_0(x, 0) = (x+1, 0),$ whenever $x \notin [0, N].$ $c_0(0, 0) = (N+1, 0),$ and $c_0(l, 0) = (l, 0),$ for $l \in \{0, \dots, N\}.$
\end{proof}
Now we are ready to prove the theorem in the case where the cycle decomposition of $\sigma$ is infinite. Let $\tau(i, j) = (i, j+1),$ for all $i, j$ The permutation $\tau$ has an infinite number of infinite cycles.
Consider $\psi=\sigma \tau.$ Each cycle of $\psi$ is infinite (because $y(\psi(p)) > y(p),$ for all $p \in \mathbb{Z}^2$), and $\psi$ has an infinite number of cycles (since $(i, 0)$ and $(k, 0)$ lie in different cycles of $\psi$ whenever $i\neq k.$). Therefore, by Lemma \ref{conjugacy1}, $\sigma \tau = a \tau a^{-1},$ for some $a,$ so $\sigma$ is a commutator. It remains to show that $\tau$ and $a$ generate a transitive group. We can pick $a$ so that $a(i, 0)=(i+1, 0)$ (since the only requirement on $a$ is that it bijectively map cycles of $\tau$ to cycles of $\sigma \tau$), and then clearly $\tau^l a^k(0, 0) = (k, l),$ for any $k, l.$
\begin{remark}
\label{transremark}
An examination of the argument above shows that when one of the cycles of $\sigma$ is infinite, then $\sigma$ and $\tau$ generate a transitive group (put the infinite cycle on the $x$-axis, then the origin can be moved by $\sigma$ to any orbit of $\tau,$ and then moved by $\tau$ to any lattice point). Indeed, we can modifiy the argument slightly to show that $\tau$ and $\sigma$ generate a transitive group even when $\sigma$ has no infinite cycles, but moves an infinite number of points -- place a set of cycles of $\sigma$ in such a way that their left-most points lie on the diagonal $x=y.$ Then, a suitable iteration of $\tau \sigma$ will move the origin to an arbitrary vertical line, and the previous argument works again.
\end{remark}
\section{The cycle decomposition of $\sigma$ is \emph{finite}}
\label{finitecycle}
In this case we prove the following lemma:
\begin{lemma}
\label{finitecyc}
If the cycle decomposition of $\sigma$ is finite, then we can label $\Omega=\mathbb{Z}^2$ in such a way that 
\[
\sigma(i,j) \in \mathbb{Z} x \{j-1,j,j+1\}  \mbox{ for all } (i,j) \in \mathbb{Z}^2.
\]
\end{lemma}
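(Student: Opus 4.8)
The plan is to mimic the structure of Lemma~\ref{horizontal}, but since $\sigma$ now has only finitely many cycles, we cannot spread those cycles out horizontally one-per-line; instead we put them \emph{all} into a single band of bounded height and then dispose of the (infinitely many) fixed points on the remaining lines. The key observation is that a finite cycle of length $n$ can be realized on $\mathbb{Z}^2$ so that it moves points only within $\mathbb{Z}\times\{j-1,j,j+1\}$: for instance, on the lines $y\in\{-1,0,1\}$ place the cycle $(a_1,\dots,a_n)$ by sending $a_1\mapsto a_2$ along $y=0$ as a unit horizontal shift on a finite segment, and when the shift would run off the segment, step up to $y=1$ (or down to $y=-1$) and come back. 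More simply: lay the $n$ points of the cycle at $(1,0),(2,0),\dots,(n,0)$, let $\sigma$ act as $x\mapsto x+1$ for $1\le x<n$, and send $(n,0)\mapsto(1,0)$ directly — this already keeps $\sigma$ on a single horizontal line. For an \emph{infinite} cycle, realize it as $(x,0)\mapsto(x+1,0)$ along the $x$-axis, again staying on one line. The only reason the statement allows displacement by $\pm 1$ in the $y$-coordinate is to have room to interleave finitely many cycles of possibly different lengths disjointly; with a little care one can in fact keep each individual cycle on its own horizontal line, but the $\{j-1,j,j+1\}$ version is all that is needed and is cleanest to arrange.

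Concretely, here is how I would carry out the construction. Since the cycle decomposition of $\sigma$ is finite, let $c_1,\dots,c_m$ be the non-trivial cycles (finite or infinite) and let $F$ be the set of fixed points of $\sigma$; then $F$ is cofinite in $\Omega$, hence infinite (or $\Omega$ itself, in which case $\sigma=e$ and any labeling works). First I would reserve the three lines $y=-1,0,1$ to host all of $c_1,\dots,c_m$: arrange their supports along the segment of the $x$-axis $0<x\le N$ for suitable $N$ (each infinite cycle instead gets a full line, say $y=1$ for the first, handled exactly as in the infinite case of Lemma~\ref{horizontal} — but note there can be only finitely many infinite cycles here, and if there is more than one we can still fit them by using the $y=\pm1$ lines as overflow, or by noting $m$ is finite so we may spread them among finitely many lines and then relabel to compress into $\{-1,0,1\}$). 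The point is that all of $\mathrm{supp}(\sigma)$ is finite unless $\sigma$ has an infinite cycle, and in either case it can be packed into $\mathbb{Z}\times\{-1,0,1\}$ with $\sigma$ moving points by at most one step vertically. Then I would biject $F$ with the complement $\mathbb{Z}^2\setminus(\text{the region used above})$, which is still a copy of $\mathbb{Z}^2$ minus a bounded (or single-line) piece, hence admits a bijection with $F$; on $F$, $\sigma$ is the identity, so trivially $\sigma(i,j)=(i,j)\in\mathbb{Z}\times\{j-1,j,j+1\}$ there. Combining, $\sigma(i,j)\in\mathbb{Z}\times\{j-1,j,j+1\}$ for all $(i,j)$.

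The main obstacle — really the only delicate point — is the bookkeeping when $\sigma$ has one or more infinite cycles together with finite cycles, so that the ``band'' needs both to carry a bi-infinite shift and to accommodate finite cycles of assorted lengths without collisions; this is exactly the kind of case split that appeared in the proof of Lemma~\ref{horizontal} and is handled by the same device of shifting an infinite cycle so that it skips over a finite segment reserved for the finite cycles (as in the last case of that proof, with $c_0(x,0)=(x+1,0)$ for $x\notin[0,N]$ and $c_0(0,0)=(N+1,0)$). Once the labeling is fixed, the intended continuation of the argument parallels the infinite case: set $\tau(i,j)=(i,j+1)$, consider $\psi=\sigma\tau$, check via Lemma~\ref{conjugacy1} that $\psi$ is conjugate to $\tau$ (here the three-line spread of $\sigma$ is what guarantees $y(\psi(p))\ge y(p)-1+1=y(p)$ never decreases, or more precisely that each cycle of $\psi$ is infinite and there are infinitely many of them), and then produce a conjugating element $a$ that, together with $\tau$, acts transitively. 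But for the present lemma it suffices to produce the labeling, so I would stop at the construction above and leave the commutator argument to the subsequent paragraph of the section.
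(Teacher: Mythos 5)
Your construction rests on a misreading of the hypothesis, and this is a genuine gap rather than a bookkeeping issue. The paper's convention (stated at the start of Section \ref{infinite}) is that a fixed point counts as a cycle of length one, so ``the cycle decomposition of $\sigma$ is finite'' means that $\sigma$ has finitely many cycles \emph{in total}, fixed points included. Since these finitely many cycles partition the countably infinite set $\Omega$, at least one of them is infinite, and the set of points lying in finite cycles --- in particular the fixed-point set $F$ --- is \emph{finite}. Your proposal asserts the opposite (``$F$ is cofinite in $\Omega$, hence infinite'') and then uses $F$ to fill up everything outside a band $\mathbb{Z}\times\{-1,0,1\}$. There are not enough points to do this: once you realize each infinite cycle as a shift along a single horizontal line and pack the finite cycles into a bounded region, the complement of the region you have used is an infinite subset of $\mathbb{Z}^2$ that would have to be labeled by the finitely many remaining points of $\Omega$. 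The labeling you describe is therefore not a bijection, and the construction collapses.

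This is exactly why the lemma is not a routine variant of Lemma \ref{horizontal}, and why the $\pm 1$ vertical displacement is unavoidable, contrary to your remark that each cycle could be kept on its own horizontal line: finitely many cycles confined to finitely many lines can never exhaust $\mathbb{Z}^2$. The paper's proof instead makes the infinite cycle(s) \emph{space-filling}: a single infinite cycle sweeps out the whole plane (or a half-plane, when there are two or more infinite cycles, with the intermediate ones placed on single lines) in nested ``semi-square'' spirals arranged so that each step changes the $y$-coordinate by at most one, and the finitely many finite cycles are tucked into a finite segment of the $x$-axis. A smaller, separate slip: in your sketch of the continuation you take $\tau(i,j)=(i,j+1)$, but since $\sigma$ may now move a point \emph{down} by one, the paper must use $\tau(i,j)=(i,j+2)$ to guarantee $y(\sigma\tau(p))-y(p)\geq 1$ and hence that every cycle of $\sigma\tau$ is infinite.
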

The proof of Lemma \ref{finitecyc} occupies the rest of this section.
The argument  is geometric (just as in Section \ref{infinite}). If the cycle decomposition of $\sigma$ is finite, there are the following possibilities:
\subsection{$\sigma$ has one infinite cycle and some finite cycles} In this case, first assume that there are no finite cycles. obviously the one cycle occupies the entire integer lattice $\mathbb{Z}^2, $ and we make $\sigma$ act as follows:
In the upper half-plane, $\sigma$ sweeps out semi-squares, as follows: 
\begin{multline*}(0, 0) \rightarrow (1, 0) \rightarrow (1, 1) \rightarrow (0, 1) \rightarrow (-1, 1) \rightarrow (-1, 0) \\
\rightarrow  (-2, 0) \rightarrow (-2, 1) \rightarrow (-2, 2) \rightarrow (-1, 2) \dotso
\end{multline*}
The pattern in the lower halfplane $x(p) < 0$ is the pattern in the upper halfplane,  reflected in the line $y=-1/2,$ and with the arrows reversed. Finally, $\sigma(0, -1) = (0, 0).$
If there are some finite cycles, arrange them on a (finite portion) of the $x$-axis, and adapt the pattern in the infinite cycle in the obvious way.
\subsection{$\sigma$ has at least two infinite  cycles $c_1, c_2,$ and possibly some finite cycles}
First, assume there are no finite cycles. Let $c_1, \dots, c_k$ be the infinite cycles of $\sigma.$ Arrange $c_1$ in the upper halfplane,$ c_j,$ for $j\in \{2, \dotsc, {k-1}\}$ on horizontal lines $y(p)=-j+1,$ and $c_k$ as the set $\{p\in \mathbb{Z}^2 \quad | \quad y(p) < 2-k\}$  We will describe the pattern of $c_1.$ Then $c_k$ will be just a reflection (and $c_2, \dotsc, c_{k-1}$ are obvious).

The carrier of $c_1$ is the set $H=\{p\in \mathbb{Z}^2 | y(p) \geq 0\}.$ We divide $H$ into the sets $H_1 =\{p\in H | x(p)\geq 0\},$ and $H_2 = H \backslash H_1.$
On $H_1,$ $\sigma$ acts in a square sweep-out almost as before:
\[
(0,. 0) \rightarrow (1, 0) \rightarrow (1, 1) \rightarrow (0, 1) \rightarrow (0, 2) \rightarrow (1, 2) \rightarrow (2, 2) \rightarrow (2, 1) \rightarrow (2, 0) \rightarrow \dots\]
and on $H_2$ the action is symmetric (but with the arrows reversed). $\sigma(-1, 0) = (0, 0).$

If there are some finite cycles, arrange them on (a finite segment of) the $x$-axis, and adapt the pattern above.

\subsection{The conclusion of the argument}
The point of the previous arrangement of $\sigma$ is that it never moves any point in the integer lattice by more than one unit vertically. Consider now the permutation $\tau,$ given by $\tau(i, j) = (i, j+2).$ The permutation $\tau$ has an infinite number of infinite cycles (just as its namesake in Section \ref{infinite}).
The permutation $\sigma \tau$  likewise has an infinite number of infinite cycles (since $y(\sigma \tau (p)) - y(p)) \geq 1$), so $\sigma \tau = a \tau a^{-1}.$ and so $\sigma$ is a commutator.

To show that $\tau$ and $a$ generate a transitive subgroup of $S_\infty$, it is enough to show that $\tau$ and $\sigma$ generate a transitive subgroup of $S_\infty.$ For this, it suffices to note for any $(i, j) \in \mathbb{Z}^2,$ there is some $k> 0,$ such that $\tau^k(i, j)$ is contained in the cycle $c_1$ of $\sigma,$ and so $\sigma^l \tau^k (i, j) = (0, 1),$ for some integers $l, k.$
\begin{remark}
\label{transremark2}
Again, note that we have actually shown that $\sigma$ and $\tau$ generate a transitive subgroup.
\end{remark}

\section{Products of powers}
\label{powersec}
Here we state the two results needed in the proof of Theorem \ref{sullthminfn}.
\begin{theorem}
\label{powers}
Let $\sigma \in S(\Omega),$ where $\Omega$ is countably infinite. Then 
\[
\sigma=\prod_{i=1}^k\alpha_i^{n_i} \prod_{i=j}^m\beta_j^{l_j}
\]
 for any $k,m>1$  where $\alpha_1, \dotsc, \alpha_k$ are commuting permutations in $S(\Omega),$ as are $\beta_1, \dots, \beta_m.$ If $\sigma$ moves an infinite number of elements of $\Omega,$ then, in addition, the group generated by $\alpha_1, \dotsc, \alpha_k, \beta_1, \dotsc, \beta_m$ acts transitively on $\Omega.$
\end{theorem}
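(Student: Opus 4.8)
The plan is to reduce the assertion to the case $k=m=2$ and to the transitive-Ore theorem (Theorem \ref{orethm}) together with the ``infinite-cycle'' strengthening in Remark \ref{transremark}. The key observation is that a product $\alpha^{n}$ with $\alpha$ ranging over all permutations is, for any fixed $n\ge 1$, an extremely unconstrained object: if $\alpha$ is an infinite cycle then $\alpha^{n}$ is a product of $n$ disjoint infinite cycles, and more generally by choosing $\alpha$ to be a disjoint union of cycles of suitable lengths one can realize a large supply of conjugacy classes as $n$-th powers. So the strategy is: (i) write $\sigma = \gamma\,\delta$ where $\gamma$ and $\delta$ are chosen to lie in conjugacy classes that are visibly $n_1$-th and (for the second factor) $l_1$-th powers respectively of single permutations, absorbing the commuting families $\alpha_2,\dots,\alpha_k$ and $\beta_2,\dots,\beta_m$ as \emph{identities} (which trivially commute with $\alpha_1$, resp.\ $\beta_1$, and contribute nothing to the product but also nothing to transitivity); then (ii) upgrade to genuine transitivity.

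First I would dispose of the ``shape'' problem. Fix $\alpha_2=\dots=\alpha_k=\beta_2=\dots=\beta_m=e$; these commute with everything, so it remains to show $\sigma=\alpha_1^{n_1}\beta_1^{l_1}$. If $n_1=1$ or $l_1=1$ this is just ``$\sigma$ is a product of two permutations'', which is trivial, and in the transitive case is exactly the content of Theorem \ref{orethm} (every $\sigma$ is $[g,h]=ghg^{-1}h^{-1}=(gh)(h^{-1}g^{-1}h)(h^{-1})$, wait — more directly, by Lemma \ref{commutator} and Remark \ref{transremark} one gets $\sigma=\tau'\rho$ with $\langle\tau',\rho\rangle$ transitive). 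So assume $n_1,l_1\ge 2$. The idea is to choose $\alpha_1$ so that $\alpha_1^{n_1}$ is a single permutation $\mu$ with infinite support — for instance take $\alpha_1$ to be one infinite cycle, so $\mu=\alpha_1^{n_1}$ is $n_1$ disjoint infinite cycles — and then we must solve $\beta_1^{l_1}=\mu^{-1}\sigma$. Now $\mu^{-1}\sigma$ has infinite support and, since $\mu$ has no finite cycles, one checks that we are free to arrange $\mu$ (via the relabeling freedom, as in Section \ref{infinite}) so that $\mu^{-1}\sigma$ is conjugate to an $l_1$-th power: concretely, every permutation with infinitely many infinite cycles and no nontrivial finite cycles is an $l_1$-th power of an infinite cycle, and by a suitable placement of $\mu$ relative to the cycle structure of $\sigma$ we can force $\mu^{-1}\sigma$ into such a class. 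This is the technical heart and I return to it below.

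Second, transitivity. Once $\sigma=\alpha_1^{n_1}\beta_1^{l_1}$ with $\sigma$ moving infinitely many points, I would mimic Remark \ref{transremark}: identify $\Omega$ with $\mathbb Z^2$, place $\alpha_1$ as the ``vertical'' infinite cycle $\alpha_1(i,j)=(i,j+1)$ on a sublattice that it sweeps out bijectively, and arrange $\beta_1$ (or rather its placement) so that $\beta_1$ together with $\alpha_1$ already generate a transitive group — this uses exactly the mechanism of Section \ref{infinite} and Section \ref{finitecycle}, where a ``$\tau$'' that translates and a ``$\sigma$'' that is nearly the identity outside one distinguished cycle jointly act transitively. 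Since $\langle\alpha_1,\dots,\alpha_k,\beta_1,\dots,\beta_m\rangle\supseteq\langle\alpha_1,\beta_1\rangle$, transitivity of the latter gives transitivity of the former, and the trivial factors $\alpha_i,\beta_j=e$ do no harm.

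The main obstacle I anticipate is step (i) for $n_1,l_1\ge 2$: showing that after fixing $\mu=\alpha_1^{n_1}$ (an $n_1$-fold union of infinite cycles), the residual permutation $\mu^{-1}\sigma$ can be guaranteed to lie in the conjugacy class of an $l_1$-th power — i.e.\ that its cycle type is ``divisible'' in the appropriate sense. The cleanest route is probably to first choose the relabeling so that $\sigma$ itself is put in a convenient normal form (using that it moves infinitely many points, so it either has an infinite cycle or infinitely many finite cycles — the two cases of Theorem \ref{orethm}), then choose $\mu$ to consume all the ``awkward'' finite cycles of $\sigma$ and leave behind only a permutation built from infinitely many infinite cycles plus, if necessary, a controlled finite-cycle remainder whose cycle lengths are multiples of $l_1$; such a permutation is visibly $\rho^{l_1}$ for an explicit $\rho$. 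One must be slightly careful about parity/support bookkeeping when $\sigma$ has finite support — but the hypothesis for the transitive conclusion is precisely that $\sigma$ has \emph{infinite} support, which is what makes the construction go through, and in the finite-support case the weaker (non-transitive) statement is all that is claimed and follows from the same normal-form manipulation with room to spare.
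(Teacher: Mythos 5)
Your proposal has a genuine gap at exactly the step you flag as the technical heart. By setting $\alpha_2=\dots=\alpha_k=\beta_2=\dots=\beta_m=e$ you commit to the much stronger claim $\sigma=\alpha_1^{n_1}\beta_1^{l_1}$, and by further insisting that $\alpha_1$ be a \emph{single} infinite cycle you force $\mu=\alpha_1^{n_1}$ to have exactly $n_1$ infinite cycles; you then need $\mu^{-1}\sigma$ to lie in the conjugacy class of an $l_1$-th power, and this is never established. The phrases ``one checks that we are free to arrange $\mu$'' and ``choose $\mu$ to consume all the awkward finite cycles'' are not backed by any construction, and with $\mu$ supported on only finitely many cycles there is no analogue of the $y$-coordinate bookkeeping of Sections \ref{infinite} and \ref{finitecycle} that would control the cycle type of $\mu^{-1}\sigma$. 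Moreover your auxiliary claim that ``every permutation with infinitely many infinite cycles and no nontrivial finite cycles is an $l_1$-th power of an infinite cycle'' is false: the $l_1$-th power of an infinite cycle has exactly $l_1$ infinite cycles. The transitivity step is likewise only asserted, not arranged.

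The intended argument is much softer and sidesteps all of this. From the proof of Theorem \ref{orethm} one has $\sigma=(\sigma\tau)\,\tau^{-1}$, where \emph{both} $\sigma\tau$ and $\tau^{-1}$ are of type $C_\infty$, i.e.\ disjoint unions of infinitely many infinite cycles. Do not collapse each product to a single power: any permutation of type $C_\infty$ equals $\prod_{i=1}^{k}\alpha_i^{n_i}$ for arbitrary $k$ and exponents, by partitioning its cycles into a block of $n_1$, a block of $n_2$, \dots, and then infinitely many blocks of $n_k$, and letting $\alpha_i$ be an infinite cycle (a disjoint union of infinite cycles for $i=k$) whose $n_i$-th power is that block; the $\alpha_i$ have disjoint supports, hence commute. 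Applying this to $\sigma\tau$ and to $\tau^{-1}$ gives the decomposition, and the generated group contains $\sigma\tau$ and $\tau^{-1}$, hence $\sigma$ and $\tau$, so transitivity is precisely Remarks \ref{transremark} and \ref{transremark2}. Your two-factor reduction can be salvaged the same way: take $\alpha_1$ itself of type $C_\infty$, so that $\alpha_1^{n_1}$ can be made equal to $\sigma\tau$ on the nose and $\beta_1^{l_1}=\tau^{-1}$ is immediate; it is the choice of a single cycle for $\alpha_1$ that manufactures the difficulty.
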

\begin{proof}
First note that the $n$-th power of an infinite cycle $C$ is a product of $n$ disjoint cycles, and so the permutation $C_\infty$  whose cycle decomposition is an infinite set of infinite cycles can be written as
\[
C_\infty = \prod_{i=1}^k \alpha_i^{n_i},
\]
for any $k>0.$  To see this, write $C_\infty$ as a group of $n_1$ cycles, followed by a group of $n_2$ cycles, etc, and then a repeating group of $n_k$ cycles. Since the supports of $\alpha_1, \dotsc, \alpha_k$ are disjoint, the permutations commute.
The statement of the theorem now follows from Remarks \ref{transremark} and \ref{transremark2}.
\end{proof}
\begin{remark}
The number of decompositions described in the statement of Theorem \ref{powers} (with a prescribed set of exponents) is  uncountably infinite.
\end{remark}
We will also need the following result, which was proved in \cite{DrosteGobel} as Lemma 4.2:
\begin{theorem}
\label{dglemma}
let $\sigma \in S(\Omega)$ have no infinite cycles. Then, for any $k>1,$ there are transitive cycles $c_1, c_2$ such that 
\[
\sigma = c_1^k c_2^k.
\]
\end{theorem}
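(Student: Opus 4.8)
The plan is to reduce the claim to the structural picture already built for Theorem~\ref{orethm}, now applied to a permutation with only finite cycles, and to check that the two conjugating permutations appearing there can each be realized as a $k$-th power of a single transitive cycle. First I would recall that, by Lemma~\ref{finitecyc}, since $\sigma$ has no infinite cycles we may label $\Omega=\mathbb{Z}^2$ so that $\sigma$ moves every lattice point by at most one unit vertically; in particular $\sigma$ itself is built from a ``square sweep-out'' cycle $c_1$ (plus possibly finitely many finite cycles parked on a segment of the $x$-axis) exactly as in Section~\ref{finitecycle}. The point of that normal form was that $\tau(i,j)=(i,j+2)$ and $\sigma\tau$ both have an infinity of infinite cycles, so $\sigma\tau=a\tau a^{-1}$ for some $a$, giving $\sigma=[a,\tau^{-1}]$ wait---more precisely $\sigma=(a\tau a^{-1})\tau^{-1}$, and $\langle\sigma,\tau\rangle$ (hence $\langle a,\tau\rangle$) is transitive by Remark~\ref{transremark2}.

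Next I would convert this commutator expression into a product of two $k$-th powers. The clean route is via the identity used in the ``simple quasi-Proof'' of Theorem~\ref{sullthminfn}, or more directly: the permutation $\tau(i,j)=(i,j+2)$ is a product of infinitely many disjoint infinite cycles (the vertical lines, split into even and odd rows), and as observed in the proof of Theorem~\ref{powers}, a permutation whose cycle decomposition is an infinite set of infinite cycles is a $k$-th power of a single infinite cycle for every $k>1$ --- group the vertical-line-cycles into blocks of $k$ and interleave them along one master cycle $c_2$. So $\tau=c_2^{\,k}$ for a transitive cycle $c_2$; conjugating, $a\tau a^{-1}=(a c_2 a^{-1})^{k}$, and $a c_2 a^{-1}$ is again a single infinite cycle. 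Setting $d_1=ac_2a^{-1}$ and $d_2=c_2^{-1}$ we get $\sigma=d_1^{\,k}d_2^{\,k}$ with $d_1,d_2$ infinite cycles; and $\langle d_1,d_2\rangle\supseteq\langle a\tau a^{-1},\tau\rangle=\langle\sigma\tau,\tau\rangle$, which contains $\sigma$ and $\tau$, hence is transitive.

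The main obstacle I anticipate is bookkeeping rather than conceptual: one must choose the decomposition $\tau=c_2^{\,k}$ and the conjugator $a$ compatibly so that $d_1=ac_2a^{-1}$ is genuinely a single cycle (automatic, since conjugation preserves cycle type) \emph{and} so that $d_1$ and $d_2$ together still see every lattice point --- i.e.\ one should double-check transitivity survives replacing $\tau$ by $c_2$, which it does because $\langle c_2\rangle\ni\tau$ already. A secondary nuisance is the requirement $k>1$: for $k=1$ the statement is just Lemma~\ref{commutator}-type content and needs the transitive Ore theorem instead, but since the hypothesis is $k>1$ this does not arise. I would also remark, paralleling the remark after Theorem~\ref{powers}, that there are uncountably many such pairs $(c_1,c_2)$, coming from the uncountably many choices of $a$ (the conjugator is unique only up to the centralizer of $\tau$, which is uncountable). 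Finally, I should note this is essentially the statement proved as Lemma~4.2 of \cite{DrosteGobel}, so the proof can legitimately be quite terse, citing that source for the non-transitive core and adding only the transitivity observation above.

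\begin{proof}
By Lemma~\ref{finitecyc}, label $\Omega=\mathbb{Z}^2$ so that $\sigma$ moves each point vertically by at most one; as in Section~\ref{finitecycle}, with $\tau(i,j)=(i,j+2)$ both $\tau$ and $\sigma\tau$ have infinitely many infinite cycles and no others, so $\sigma\tau=a\tau a^{-1}$ for some $a\in S(\Omega)$, whence $\sigma=(a\tau a^{-1})\tau^{-1}$; by Remark~\ref{transremark2}, $\langle\sigma,\tau\rangle$ is transitive, hence so is $\langle a\tau a^{-1},\tau\rangle=\langle\sigma\tau,\tau\rangle\supseteq\langle\sigma,\tau\rangle$. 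Now $\tau$ is a disjoint product of infinitely many infinite cycles (the even and odd vertical lines), so for any $k>1$, grouping these cycles into blocks of $k$ and splicing the blocks along a single bi-infinite orbit exhibits a transitive cycle $c_2$ with $\tau=c_2^{\,k}$; then $c_1:=a c_2 a^{-1}$ is again a single infinite cycle and $a\tau a^{-1}=c_1^{\,k}$. Thus
\[
\sigma=c_1^{\,k}\,c_2^{-k}=c_1^{\,k}\,(c_2^{-1})^{k},
\]
and $\langle c_1,c_2^{-1}\rangle\supseteq\langle a\tau a^{-1},\tau\rangle$ is transitive. Replacing $c_2$ by $c_2^{-1}$ (still a transitive cycle) gives the asserted form $\sigma=c_1^{\,k}c_2^{\,k}$. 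This is Lemma~4.2 of \cite{DrosteGobel}, to which we refer for the remaining details.
\end{proof}
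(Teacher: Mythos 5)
There is a genuine gap, and it is at the crux of your argument. You claim that $\tau(i,j)=(i,j+2)$ can be written as $c_2^{\,k}$ for a \emph{single} transitive cycle $c_2$ by ``grouping the vertical-line cycles into blocks of $k$ and splicing.'' This is impossible: the $k$-th power of a single infinite cycle has \emph{exactly} $k$ orbits (if $c$ acts as $n\mapsto n+1$ on $\mathbb{Z}$, then $c^k$ splits $\mathbb{Z}$ into the $k$ residue classes mod $k$), whereas your $\tau$ has countably infinitely many infinite orbits. Grouping the orbits of $\tau$ into blocks of $k$ only expresses $\tau$ as $(\prod_m d_m)^k$ where $\prod_m d_m$ is a product of infinitely many disjoint infinite cycles --- which is exactly the move made in the proof of Theorem \ref{powers}, where the factors $\alpha_i$ are \emph{not} required to be single cycles. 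No amount of splicing fixes this, since ``number of orbits $\le k$'' is a hard invariant of being a $k$-th power of one cycle. Consequently $a\tau a^{-1}=c_1^{\,k}$ fails for the same reason, and the whole reduction from the commutator identity $\sigma=(a\tau a^{-1})\tau^{-1}$ collapses.

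A correct argument has to run in the other direction: fix a transitive cycle $c_2$ first, so that $c_2^{-k}$ has exactly $k$ infinite orbits and nothing else, and then arrange the labeling of $\Omega$ (using that $\sigma$ has only finite cycles) so that $\sigma c_2^{-k}$ \emph{also} has exactly $k$ orbits, all infinite; any such permutation is the $k$-th power of a single infinite cycle obtained by interleaving its $k$ orbits, giving $\sigma c_2^{-k}=c_1^{\,k}$. This requires a bookkeeping construction genuinely different from the one in Sections \ref{infinite}--\ref{finitecycle}, because the conjugacy criterion used there (``infinitely many infinite cycles'') is far too coarse here. The paper itself does not supply this construction; it simply cites Lemma 4.2 of \cite{DrosteGobel}, where the statement is proved. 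Your closing citation of that lemma is appropriate, but it cannot be used to paper over the false intermediate claim $\tau=c_2^{\,k}$.
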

\bibliographystyle{plain}
\bibliography{rivin}
\end{document}